\newtheorem{definition}{Definition}
\newtheorem{theorem}{Theorem}
\newtheorem{lemma}[theorem]{Lemma}
\newtheorem{remark}{Remark}
\newtheorem{proposition}[theorem]{Proposition}
\newtheorem{corollary}[theorem]{Corollary}
\numberwithin{equation}{section}
\def\inv{\mathrm{inv}}
\def\aid{\mathrm{aid}}
\def\ai{\mathrm{ai}}
\def\st{\mathrm{st}}
\def\Fix{\mathrm{FIX}}
\def\fix{\mathrm{fix}}
\def\rix{\mathrm{rix}}
\def\max{\mathrm{max}}
\def\exc{\mathrm{exc}}
\def\dd{\mathrm{dd}}
\def\des{\mathrm{des}}
\def\Des{\mathrm{DES}}
\def\maj{\mathrm{maj}}
\def\imaj{\mathrm{imaj}}
\def\S{\mathfrak{S}}
\def\R{\mathcal{R}}
\def\DE{\mathcal{E}}
\def\da{\mathrm{da}}
\def\SCF{\mathrm{SCF}}
\def\LG{{L-hook}}
\def\FG{{F-hook}}
\def\cyc{\mathrm{cyc}}
\def\lyc{\mathrm{lyc}}
\def\valley{\mathrm{valley}}
\def\peak{\mathrm{peak}}
\def\cda{\mathrm{cda}}
\def\Rix{\mathrm{RIX}}
\def\Orb{\mathrm{Orb}}
\def\imaj{\mathrm{imaj}}
\def\N{\mathbb N}
\def\Z{\mathbb Z}
\def\A{\mathfrak{A}}
\def\D{\mathfrak{D}}
\def\T{\mathcal{T}}
\begin{document}

\title[The $\gamma$-positivity  of 
basic Eulerian polynomials]{The $\gamma$-positivity  of
basic Eulerian polynomials\\ via group actions}

\author{Zhicong Lin}
\address[Zhicong Lin]{School of Sciences, Jimei University, Xiamen 361021,
P.R. China
% \& Institut Camille Jordan,  Universit\'{e} Claude Bernard Lyon 1, France
}
\email{lin@math.univ-lyon1.fr}

\author{Jiang Zeng}
\address[Jiang Zeng]{Institut Camille Jordan,  Universit\'{e} Claude Bernard Lyon 1, France}
\email{zeng@math.univ-lyon1.fr}

\date{\today}
\begin{abstract}
We provide combinatorial interpretation for the $\gamma$-coefficients of 
 the basic Eulerian polynomials that enumerate permutations by 
 the excedance statistic and the major index as well as 
 the corresponding  $\gamma$-coefficients for derangements. Our results  refine the classical $\gamma$-positivity results for the Eulerian polynomials and the derangement polynomials.
 The main tools are  Br\"and\'en's  modified Foata--Strehl action on permutations and the recent  triple statistic ($\des, \rix,\aid$) equidistibuted with ($\exc, \fix, \maj$).
\end{abstract}
\keywords{Eulerian polynomials; $\gamma$-positivity;
derangements;  major index;  inversions; admissible inversions}
\maketitle
%\tableofcontents
%%%%%%%%%%%%%%%%%%%%%%%%%%%%%%%%
\section{Introduction}
Any polynomial $h(t)=\sum_{i=0}^nh_it^i$ with 
symmetric coefficient sequence ($h_{n-i}=h_i$) can be expanded uniquely as 
$h(t) =\sum_{k=0}^{\lfloor {n/2}\rfloor}\gamma_k t^k(1 + t)^{n-2k}$,
and  is said to be
{\em$\gamma$-positive} if  $\gamma_k\geq 0$.   
An interesting problem is then to see whether there are any combinatorial or geometric 
meaning of the $\gamma$-coefficients when they are positive, see \cite{br,fs, npt,prw}.
It is well konwn that 
Eulerian polynomials  are $\gamma$-positive, see \eqref{sym:unim} below. 
In this work we shall study two generalizations of the $\gamma$-positivity of Eulerian polynomials.

A permutation $\sigma$ of $[n]:=\{1,2,\ldots,n\}$
  is  a bijection from  $[n]$ to $[n]$, which will be identified with  the word $\sigma=\sigma_1\cdots\sigma_n$, where $\sigma_i=\sigma(i)$.  We  denote by $\S_n$ the set of all permutations of $[n]$.
 Given a  $\sigma\in\S_n$, an integer $i\in[n]$ is an \emph{excedance} (resp.~\emph{fixed point}) of $\sigma$ if $\sigma_i>i$ (resp.~$\sigma_i=i$). Denote by $\exc(\sigma)$ and $\fix(\sigma)$ the number of excedances and fixed points  of $\sigma$, respectively. Let $\maj(\sigma):=\sum_{\sigma_i>\sigma_{i+1}} i$ be the major index of $\sigma$.  
 Shareshian and Wachs~\cite{sw2,sw} introduced and 
 studied 
the  {\em basic Eulerian polynomials}
$$
A_{n}(t,r,q):=\sum_{\sigma\in \S_n}t^{\exc(\sigma)} r^{\fix(\sigma)}q^{\maj(\sigma)-\exc(\sigma)}.
$$
In \cite[Remark~5.5]{sw} they noticed  that using an 
unpublished result of Gessel they could prove, among other things,  that 
\begin{enumerate}
\item there are polynomials $\gamma_{n,k}(q)$ in  $\N[q]$ such that
\begin{equation}\label{gam:euler}
A_n(t, 1, q)=\sum_{k=0}^{{\lfloor n/2\rfloor}} \gamma_{n,k}(q) t^k(1+t)^{n-2k}.
\end{equation}
\item  there are polynomials $\widetilde{\gamma}_{n,k}(q)$ in  $\N[q]$ such that
\begin{equation}\label{gam:der}
A_n(t, 0, q)= \sum_{k=0}^{{\lfloor (n-1)/2\rfloor}} \widetilde{\gamma}_{n,k}(q) t^k(1+t)^{n-1-2k}.
\end{equation}
\end{enumerate}
They have also two  refined versions of \eqref{gam:der}, that we shall discuss  in the last section, see \eqref{exp:fixed} and \eqref{SW3}.

The aim of this paper is to provide combinatorial 
interpretations for the coefficients $\gamma_{n,k}(q)$ and $\widetilde{\gamma}_{n,k}(q)$. 
Several $q$-analogs of Eulerian polynomials with combinatorial meanings have been studied in the literature (see \cite{sw2} and references therein) and various extensions of~\eqref{sym:unim} and \eqref{eq:der} have already been obtained in \cite{br, sz, hjz, sz14}. 
We first review some partial and related 
recent results on this topic.

 Let $\sigma=\sigma_1\sigma_2\ldots \sigma_n$ be permutation 
  in $\S_n$ and by convention  $\sigma_0=\sigma_{n+1}=+\infty$.
   We call $\sigma_i$ ($1\leq i\leq n$) a 
   {\em double descent} (resp.~{\em double ascent}, 
   {\em peak}, {\em valley}) of $\sigma$ if
$
\sigma_{i-1}>\sigma_i>\sigma_{i+1}
$
(resp. $\sigma_{i-1}<\sigma_i<\sigma_{i+1}$, $\sigma_{i-1}<\sigma_i>\sigma_{i+1}$, $\sigma_{i-1}>\sigma_i<\sigma_{i+1}$). Denote by  $\dd(\sigma)$ (resp.~$\da(\sigma)$, $\peak(\sigma)$, $\valley(\sigma)$) the number of double descents (resp.~double ascents, peaks, valleys) of $\sigma$.  For each $\sigma\in\S_n$ let 
$\cda(\sigma):=|\{i : \sigma^{-1}(i)<i<\sigma(i)\}|$
be the number of {\em double excedances} or {\em cyclic double ascents} of $\sigma$ 
and $\cyc(\sigma)$ denote the number of cycles of $\sigma$.   For $1\leq k\leq n$ define the sets
\begin{align*}
\D_{n,k}:&=\{\sigma\in\S_n : \dd(\sigma)=0,\des(\sigma)=k\},\\
 \widetilde{\D}_{n,k}:&=\{\sigma\in \D_{n,k-1}: \sigma_{n-1}<\sigma_n\},\\
  \DE_{n,k}:&=\{\sigma\in\S_n: \fix(\sigma)=0,\cda(\sigma)=0,\exc(\sigma)=k\}.
\end{align*}

For example, for $n=4$, we have 
\begin{align}
\D_{4,0}&=\{1234\}\quad\text{and}\quad 
   \D_{4,1}=\{1324,1423,2314,2413,3412,1243,1342,2341\};\label{D}\\
  \widetilde{\D}_{4,1}&=\{1234\}\quad{and} \quad
  \widetilde{\D}_{4,2}=\{1324,1423,2314,2413,3412\};\label{DE}\\
  \DE_{4,1}&=\{4123\}\quad\text{and}\quad 
   \DE_{4,2}=\{2143,2413, 3412, 4312, 4321\}.\label{E}
   \end{align} 
A classical result of Foata and Sch\"utzenberger~\cite{fs} states that the Eulerian polynomials have the following $\gamma$-expansion, which implies  
both the {\em symmetry} and {\em unimodality} (see for instance~\cite{sw} for definitions) of the sequence of Eulerian numbers.
\begin{theorem}[Foata--Sch\"utzenberger]\label{euler} One has 
\begin{align}\label{sym:unim}
A_{n}(t,1,1)&=\sum_{k=0}^{\lfloor(n-1)/2\rfloor}|\D_{n,k}|t^k(1+t)^{n-1-2k}.
\end{align} 
 \end{theorem}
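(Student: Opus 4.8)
The plan is to prove the Foata--Sch\"utzenberger $\gamma$-expansion \eqref{sym:unim} by exhibiting an explicit involution on $\S_n$ whose orbits group permutations together so that within each orbit the descent statistic ranges over a full interval, producing the factor $(1+t)^{n-1-2k}$. Concretely, I would use Br\"anden's modified Foata--Strehl (MFS) action, which the abstract flags as the main tool. For each letter value $x\in[n]$ one defines an operator $\varphi_x$ that acts on a permutation $\sigma$ by relocating $x$: using the convention $\sigma_0=\sigma_{n+1}=+\infty$, the value $x$ sits in some position and one slides it to the unique other slot that toggles whether $x$ is a double ascent/descent versus a peak/valley, leaving all other letters in their relative order. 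These operators are involutions and commute, so the group $(\Z/2\Z)^n$ they generate partitions $\S_n$ into orbits.

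The key structural facts I would establish in order are as follows. First, $\varphi_x$ fixes $\sigma$ exactly when $x$ is a valley or a peak of $\sigma$, and otherwise swaps a double ascent into a double descent (or vice versa) at the value $x$; in all cases the set of valleys and peaks is an \emph{invariant} of the whole orbit, while each double ascent can be independently flipped to a double descent. Second, I would check the effect on the descent number: flipping a double ascent into a double descent increases $\des$ by exactly $1$, so as one ranges over all $2^{\dd(\sigma)+\da(\sigma)}$ sign choices within an orbit, the descent statistic runs over an arithmetic progression. Third, each orbit contains a unique representative $\hat\sigma$ with $\dd(\hat\sigma)=0$, namely the one obtained by flipping every double descent up to a double ascent; this $\hat\sigma$ lies in $\D_{n,k}$ for $k=\des(\hat\sigma)$, and a valley-counting argument (each of the $n$ positions is a valley, peak, double ascent, or double descent, combined with the boundary conventions) shows the orbit of a permutation with $\des=k$ and no double descents has size $2^{n-1-2k}$ and contributes precisely $t^k(1+t)^{n-1-2k}$ to $A_n(t,1,1)=\sum_\sigma t^{\des(\sigma)}$.

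Summing over the orbit representatives, which are exactly the elements of $\bigsqcup_k \D_{n,k}$, then yields $A_n(t,1,1)=\sum_k|\D_{n,k}|\,t^k(1+t)^{n-1-2k}$, using that $A_n(t,1,1)=\sum_{\sigma}t^{\exc(\sigma)}$ equals the descent-generating polynomial by the classical Eulerian equidistribution $\exc\sim\des$. I would handle this equidistribution either by citing the standard bijection (e.g.\ the fundamental transformation) that sends $\exc$ to $\des$, or by working with $\des$ throughout and noting $A_n(t,1,1)=\sum_\sigma t^{\des(\sigma)}$ directly.

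I expect the main obstacle to be the careful bookkeeping of the boundary conventions $\sigma_0=\sigma_{n+1}=+\infty$ and the precise verification that $\varphi_x$ is well-defined, involutive, and preserves the peak/valley set while flipping double ascents and descents with the stated effect on $\des$. Getting the orbit-size count exactly right — reconciling the $n$ letters against the $n-1$ interior descent slots and confirming the exponent $n-1-2k$ matches $2k$ peaks-plus-valleys removed from $n-1$ adjacencies — is where sign and off-by-one errors most easily creep in, so I would pin down these counts on the small cases in \eqref{D} before asserting the general formula.
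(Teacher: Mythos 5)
Your proof is correct and follows essentially the same route as the paper: the paper does not reprove Theorem~\ref{euler} from scratch (it cites Foata--Sch\"utzenberger), but its proof of the $q$-analogue Theorem~\ref{main:result} in Section~\ref{sec:main} is exactly this MFS-orbit argument --- unique double-descent-free representative per orbit, orbit contribution $t^{\des(\bar\sigma)}(1+t)^{\da(\bar\sigma)}$ with $\da(\bar\sigma)=n-1-2\des(\bar\sigma)$ --- and your argument is its $q=1$ specialization, with the $\exc\sim\des$ equidistribution handled by the fundamental transformation in place of the paper's identity~\eqref{des:ai}. One minor slip in wording: the modified action never toggles a letter between double ascent/descent and peak/valley, it only exchanges double ascents with double descents while fixing peaks and valleys, but you state this correctly in your ``First'' step, so the argument stands.
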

Shin and Zeng~\cite[Theorem~11]{sz}  have recently proved   the following result  via continued fractions.  
\begin{theorem}[Shin--Zeng]\label{theoremSZ} One has
\begin{equation}\label{cycle}
\sum_{\substack{ \sigma\in\S_n \\ \fix(\sigma)=0}}\beta^{\cyc(\sigma)}t^{\exc(\sigma)}=\sum_{k=1}^{\lfloor n/2\rfloor}\biggl(\:\sum_{\sigma\in \DE_{n,k}}\beta^{\cyc(\sigma)}\biggr)t^k(1+t)^{n-2k}.
\end{equation}
\end{theorem}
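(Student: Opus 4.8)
The plan is to prove \eqref{cycle} by a \emph{cyclic} analogue of Br\"and\'en's modified Foata--Strehl (MFS) action, reducing it to the ordinary (linear) MFS action on words. For $\sigma\in\S_n$ with $\fix(\sigma)=0$, call a value $i$ a cyclic valley if $\sigma^{-1}(i)>i<\sigma(i)$, a cyclic peak if $\sigma^{-1}(i)<i>\sigma(i)$, a cyclic double ascent if $\sigma^{-1}(i)<i<\sigma(i)$ (these are the ones counted by $\cda$), and a cyclic double descent if $\sigma^{-1}(i)>i>\sigma(i)$. First I would set up the reduction: in each cycle, read the cycle starting from its largest entry $M$, obtaining a word $w=c_1\cdots c_{\ell-1}$ on the remaining entries. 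Since $M$ exceeds every $c_j$, placing the sentinels $+\infty$ at both ends of $w$ reproduces exactly the cyclic comparisons, so the cyclic double ascents/descents (resp.\ valleys/peaks) among the non-maximal entries coincide with the linear double ascents/descents (resp.\ valleys/peaks) of $w$, while $M$ is always a cyclic peak. Applying Br\"and\'en's valley-hopping to each such word and re-closing the cycle defines, for every value $x$, an involution $\varphi_x$ on derangements that interchanges the two states in which $x$ is a cyclic double ascent or a cyclic double descent, and fixes $\sigma$ when $x$ is a cyclic peak or valley.

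Next I would record the invariance properties and run the orbit count. Each $\varphi_x$ keeps $x$ inside its own cycle, so it preserves both $\cyc$ and $\fix$; and since a cyclic double ascent $i$ has $\sigma(i)>i$ while a cyclic double descent has $\sigma(i)<i$, toggling $x$ changes $\exc$ by exactly $\pm1$. Moreover $\exc(\sigma)$ equals the number of cyclic valleys plus $\cda(\sigma)$, the maps $\varphi_x$ commute, and the cyclic valleys and peaks are common to an entire orbit. For a derangement the number of cyclic peaks equals the number of cyclic valleys, say $k$, so the number of cyclic double ascents and descents together is $n-2k$, and these $n-2k$ positions may be toggled freely and independently under the group $(\Z/2\Z)^{\,n-2k}$ that they generate. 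Each orbit therefore contains a unique element with $\cda=0$, which by definition lies in $\DE_{n,k}$ and has $\exc=k$; summing $\beta^{\cyc}t^{\exc}$ over the orbit gives $\beta^{\cyc}t^{k}(1+t)^{n-2k}$, because $\cyc$ and the valley count are orbit-invariant while the $n-2k$ togglable positions each contribute a factor $(1+t)$. Summing over all orbits, indexed by $\bigcup_k\DE_{n,k}$, then yields \eqref{cycle}.

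The hard part will be establishing that this cyclic valley-hopping is genuinely well defined, i.e.\ that the $\varphi_x$ are commuting involutions each preserving the cycle structure. The delicate point is that deleting a cyclic double ascent $x$ (with $\sigma^{-1}(x)<x<\sigma(x)$) and reconnecting its neighbours leaves their types unchanged, precisely because $\sigma^{-1}(x)<x<\sigma(x)$ forces $\sigma^{-1}(x)<\sigma(x)$, and dually for a cyclic double descent; verifying that the reinsertion on the opposite slope is canonical and type-preserving is exactly what makes the reduction to the linear MFS action clean. Once the correspondence between the cyclic types of the non-maximal entries and the linear types of the max-rooted cycle word is checked against the $+\infty$ sentinels, the involution, commutation, and unique-representative statements follow from the known properties of Br\"and\'en's action applied cycle by cycle, and the factorization over cycles assembles the global factor $(1+t)^{n-2k}$.
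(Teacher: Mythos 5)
Your proof is correct, but it follows a genuinely different route from the paper's. You construct a \emph{cyclic} valley-hopping directly on fixed-point-free permutations: root each cycle at its maximum, observe that with the $+\infty$ sentinels the linear types of the resulting word match the cyclic types of the non-maximal entries, apply Br\"and\'en's $\varphi'_x$ cycle by cycle, and re-close. Since each $\varphi_x$ permutes entries within their own cycle, $\cyc$ is automatically orbit-invariant, each orbit has a unique representative with $\cda=0$ (hence lying in $\DE_{n,k}$ with $\exc=k=$ number of cyclic valleys), and the orbit sum gives $\beta^{\cyc}t^k(1+t)^{n-2k}$. This is essentially the Sun--Wang group action on derangements, upgraded to carry the weight $\beta^{\cyc}$; the paper itself notes that the $\beta=1$ case of this argument appears in the cited works of Athanasiadis--Savvidou and Sun--Wang. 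The paper instead stays entirely in the linear world: it defines $\lyc(\sigma):=\cyc(\Phi(\sigma))$, where $\Phi$ is the rix-factorization bijection sending $(\des,\Rix)$ to $(\exc,\Fix)$, shows $\lyc$ is invariant under the restricted MFS-action because $\varphi''_x$ preserves the rix-factorization type, applies Lemma~\ref{exp:rix} to expand over the sets $\R^0_{n,k}$, and then transports everything to $\DE_{n,k}$ via $\Phi$. Your approach is more direct and self-contained for this one identity and makes the preservation of $\cyc$ transparent; the paper's approach reuses machinery already built for the $q$-analogues (Theorems~\ref{main:result} and~\ref{main:result2}), so Theorem~\ref{theoremSZ} falls out as a corollary and the cyclic model $\DE_{n,k}$ is tied explicitly to the linear models $\R^0_{n,k}$ and $\widetilde{\D}_{n,k}$. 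The one point you should spell out fully in a polished write-up is the verification that re-closing the cycle after a hop is consistent (the maximum stays at the root, so the rooted word of the new cycle is exactly $\varphi'_x(w)$), which is what makes the composition and commutation arguments legitimate.
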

In particular, we have the derangement analogue of \eqref{sym:unim}
 \begin{align}
\label{eq:der}
A_n(t,0,1)&=\sum_{k=1}^{\lfloor n/2\rfloor}|\DE_{n,k}|t^k(1+t)^{n-2k}.
 \end{align}
For $n=4$, it follows from \eqref{E} that 
   \begin{align*}
\sum_{\substack{ \sigma\in\S_n \\ \fix(\sigma)=0}}\beta^{\cyc(\sigma)}t^{\exc(\sigma)}
   &=\beta t(1+t)^2+(2\beta+3\beta^2)t^2\\
   &=\beta t+4\beta t^2+3\beta^2t^2+\beta t^3.
   \end{align*}
   
A permutation $\sigma\in\S_n$ is called {\em alternating} if $\sigma_1<\sigma_2>\sigma_3<\sigma_4>\cdots$. 
An \emph{inversion} of a permutation $\sigma\in\S_n$ is a pair $(\sigma_i,\sigma_j)$ such that $1\leq i<j\leq n$ and $\sigma_i>\sigma_j$. Let $\inv(\sigma)$ be the number of inversions of $\sigma$. The set of alternating permutations of order $n$ is denoted by $\A_n$.  When $t=-1$
Foata and Han~\cite[Theorem~1]{fh} proved the following $q$-analogue of 
 \eqref{sym:unim} and \eqref{eq:der}.
%%%%%%%%%%%%%
\begin{theorem}[Foata-Han]\label{FH} One has  $A_{2n}(-1,1,q)=A_{2n-1}(-1,0,q)=0$ 
\textup{(}$n\geq1$\textup{)} and
\begin{align*}
A_{2n+1}(-1,1,q)&=(-1)^n
\sum_{\sigma\in\A_{2n+1}}q^{\inv(\sigma)}
\quad (n\geq0);\\
%\label{der:even}
A_{2n}(-1,0,q)&=(-1)^n\sum_{\sigma\in\A_{2n}}q^{\inv(\sigma)}\quad (n\geq 0).
\end{align*}
\end{theorem}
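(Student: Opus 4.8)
The plan is to derive all four identities from a single mechanism: a weight-preserving, sign-reversing involution built from a valley-hopping group action, whose fixed points are alternating permutations. I treat the two specializations $r=1$ and $r=0$ with the two flavours of the action.

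For $r=1$ I first pass to descents. By the equidistribution $(\exc,\fix,\maj)\sim(\des,\rix,\aid)$ quoted in the abstract,
\begin{equation*}
A_n(-1,1,q)=\sum_{\sigma\in\S_n}(-1)^{\des(\sigma)}q^{\aid(\sigma)-\des(\sigma)}.
\end{equation*}
Now I apply Br\"and\'en's modified Foata--Strehl action: with $\sigma_0=\sigma_{n+1}=+\infty$ each $i\in[n]$ is a peak, valley, double ascent or double descent, and the involution $\varphi_x$ fixes the peaks and valleys of $\sigma$ while flipping the value $x$ between the double-ascent and double-descent types, changing $\des$ by exactly one and leaving every other value's type untouched. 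Let $\iota$ apply $\varphi_x$ to the largest double-ascent-or-double-descent value $x$ (and fix $\sigma$ if none exists). Because the set of such $x$ is preserved by each $\varphi_x$, the map $\iota$ is an involution; it reverses $(-1)^{\des}$, so---granting that $\aid-\des$ is $\varphi_x$-invariant---it pairs off every non-alternating permutation with one of cancelling weight. The fixed points are the permutations with $\da=\dd=0$; the boundary convention forces a valley at positions $1$ and $n$, hence a type pattern valley, peak, \dots, valley that closes up only when $n$ is odd. This already gives $A_{2n}(-1,1,q)=0$, and for $n=2m+1$ it leaves exactly $\A_{2m+1}$, on which $\des\equiv m$ and $\aid-\des=\inv$, yielding $(-1)^m\sum_{\sigma\in\A_{2m+1}}q^{\inv(\sigma)}$.

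For $r=0$ I work directly with excedances, where $\fix=0$ is natural, and use the cyclic version of the action, which toggles cyclic double ascents and cyclic double descents (the statistics $\cda$ and $\cdd$), changes $\exc$ by one, and preserves both $\maj-\exc$ and fixed-point-freeness. The same largest-toggleable-value recipe gives a sign-reversing involution on derangements; its fixed points are the fixed-point-free $\sigma$ with $\cda(\sigma)=\cdd(\sigma)=0$, which exist only when $n$ is even, forcing $A_{2n-1}(-1,0,q)=0$. For $n=2m$ these survivors all have $\exc=m$ and are carried, by the restriction of the correspondence underlying the equidistribution, bijectively onto $\A_{2m}$ with $\maj-\exc$ turning into $\inv$, producing $(-1)^m\sum_{\sigma\in\A_{2m}}q^{\inv(\sigma)}$.

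The main obstacle is the invariance of the Mahonian weight under the two actions: it is classical that valley-hopping proves the $\gamma$-positivity at $q=1$, i.e. Theorem~\ref{euler} and, in its cyclic form, Theorem~\ref{theoremSZ}, but the $q$-refinement requires showing that $\aid-\des$ (respectively $\maj-\exc$) is constant on every orbit, and then that it collapses to $\inv$ on the alternating fixed points. Verifying these invariance-and-collapse statements, together with the parity bookkeeping that makes the two cases vanish in complementary parities, is the substantive part; the signs $(-1)^n$ and the vanishing then fall out automatically. Equivalently, once the $\gamma$-expansions \eqref{gam:euler} and \eqref{gam:der} are granted, the conclusion follows by retaining the unique summand whose power of $(1+t)$ vanishes at $t=-1$---present exactly in the stated parities---and identifying that top coefficient, via the action, with $\sum_{\sigma\in\A_n}q^{\inv(\sigma)}$.
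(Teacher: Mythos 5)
Your $r=1$ argument is essentially the paper's: the modified Foata--Strehl action preserves $\ai=\aid-\des$ (this is Lemma~\ref{lem:act}), each orbit contributes $q^{\ai(\bar\sigma)}t^{\des(\bar\sigma)}(1+t)^{\da(\bar\sigma)}$, at $t=-1$ only orbits whose representative has $\da=\dd=0$ survive (which forces $n$ odd and the representative alternating), and $\ai=\inv$ on double-descent-free permutations (Lemma~\ref{lem:inv}). Packaging this as a largest-toggleable-value involution instead of an orbit sum is cosmetic. Your closing observation --- that the whole theorem drops out of the $\gamma$-expansions \eqref{per} and \eqref{der2} at $t=-1$ together with $\A_{2n+1}=\D_{2n+1,n}$ and $\A_{2n}=\widetilde{\D}_{2n,n}$ --- is exactly the paper's one-line deduction of Theorem~\ref{FH} from Theorems~\ref{main:result} and~\ref{main:result2}.

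The gap is in the $r=0$ half, i.e.\ in how you propose to obtain \eqref{der2}. You posit a cyclic valley-hopping action on derangements that toggles cyclic double ascents/descents, changes $\exc$ by one per toggle, and preserves $\maj-\exc$. No such action exists. Take $n=4$: the unique derangement with $\exc=1$ is $4123=(1\,4\,3\,2)$, with $\maj-\exc=0$ and with $2$ and $3$ both cyclic double descents; any commuting family of such toggles puts $4123$ in an orbit with $\exc$-multiset $\{1,2,2,3\}$, hence containing $2341$ (the unique derangement with $\exc=3$, also $\maj-\exc=0$) and two derangements with $\exc=2$. Invariance of $\maj-\exc$ would force both of these to have $\maj-\exc=0$, but among the $\exc=2$ derangements of $[4]$ only $2413$ and $3412$ satisfy $\maj-\exc=0$, and $3412$ has $\cda=\cdd=0$, so it is fixed by every toggle and cannot lie in a nontrivial orbit. (The same obstruction kills the involution version: $\iota$ would have to pair $2341$ with $3412$, contradicting involutivity since $3412$ has nothing to toggle.) This failure is precisely why the paper does not attack the derangement case on the cyclic side: it transfers $(\exc,\fix,\maj)$ to $(\des,\rix,\aid)$ via \eqref{des:ai}, introduces the rix-factorization, and runs the \emph{restricted} action $\varphi''_x$ --- which freezes $\beta_1(\sigma)$ and the rixed points --- on the set $\mathcal{R}_n$ of permutations without rixed points; the orbit representatives form $\R^{0}_{n,k}$ and are carried to $\widetilde{\D}_{n,k}$ by one further hop $f=\varphi'_{\beta_1(\sigma)}$, where finally $\ai=\inv$ applies. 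Without some substitute for that machinery, your sketch does not establish $A_{2n-1}(-1,0,q)=0$ or the evaluation of $A_{2n}(-1,0,q)$.
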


Our main results, Theorems~\ref{main:result} and \ref{main:result2}, 
are $q$-analogues of \eqref{sym:unim} and \eqref{eq:der} for general $t$. 
  \begin{theorem}\label{main:result}
 We have
 \begin{align}
 A_n(t,1,q)=\sum_{k=0}^{\lfloor(n-1)/2\rfloor}\biggl(\:\sum_{\sigma\in \D_{n,k}}q^{\inv(\sigma)} \biggr)t^k(1+t)^{n-1-2k}.\label{per}
\end{align}
  \end{theorem}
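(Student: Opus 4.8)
The plan is to transport the pair $(\exc,\maj)$ first to $(\des,\aid)$ via the cited equidistribution, and then to run Br\"and\'en's modified Foata--Strehl (MFS) action on the descent side while tracking the $q$-weight. Since $(\des,\rix,\aid)$ is equidistributed with $(\exc,\fix,\maj)$ on $\S_n$, specializing $r=1$ gives
\[
A_n(t,1,q)=\sum_{\sigma\in\S_n}t^{\exc(\sigma)}q^{\maj(\sigma)-\exc(\sigma)}=\sum_{\sigma\in\S_n}t^{\des(\sigma)}q^{\aid(\sigma)-\des(\sigma)}.
\]
Using the defining relation $\aid=\ai+\des$, where $\ai$ is the number of admissible inversions, so that $\aid-\des=\ai$, it suffices to prove
\[
\sum_{\sigma\in\S_n}t^{\des(\sigma)}q^{\ai(\sigma)}=\sum_{k}\Bigl(\sum_{\sigma\in\D_{n,k}}q^{\inv(\sigma)}\Bigr)t^k(1+t)^{n-1-2k}.
\]

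For this identity I would invoke the MFS action $\{\varphi_x\}_{x\in[n]}$, a family of commuting involutions generating a $\Z_2^n$-action on $\S_n$ (with the convention $\sigma_0=\sigma_{n+1}=+\infty$), together with Br\"and\'en's structural facts that already prove Theorem~\ref{euler}: $\varphi_x$ fixes $\sigma$ unless $x$ is a double ascent or double descent, in which case it toggles $x$ between these two types while preserving the type of every other letter; each orbit has a unique representative with no double descent; and if that representative $\hat\sigma$ lies in $\D_{n,k}$ then $\sum_{\tau\in\Orb(\hat\sigma)}t^{\des(\tau)}=t^k(1+t)^{n-1-2k}$. Granting this, the identity reduces to two lemmas: \textbf{(A)} the statistic $\ai$ is constant on every MFS-orbit, and \textbf{(B)} for a no-double-descent representative $\hat\sigma$ one has $\ai(\hat\sigma)=\inv(\hat\sigma)$. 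Indeed, decomposing the left-hand sum into orbits, pulling out $q^{\ai(\hat\sigma)}$ by (A), and then applying (B) and the orbit descent-polynomial yields precisely the right-hand side.

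Lemma (B) is the short step. Recall that an inversion at positions $i<j$ is \emph{admissible} if either $i>1$ and $\hat\sigma_{i-1}<\hat\sigma_i$, or some letter strictly between positions $i$ and $j$ exceeds $\hat\sigma_i$. If $\hat\sigma$ has no double descent I claim every inversion is admissible, whence $\ai=\inv$: if the letter in position $i$ is preceded by a smaller letter the first condition holds, and otherwise position $i$ is a valley (the absence of double descents forbids $\hat\sigma_{i-1}>\hat\sigma_i>\hat\sigma_{i+1}$), so $\hat\sigma_{i+1}>\hat\sigma_i$; since the inversion forces $i+1<j$, the choice $l=i+1$ verifies the second condition. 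The boundary case $i=1$ is identical, using $\hat\sigma_1<\hat\sigma_2$.

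Lemma (A) is the main obstacle and the heart of the argument. By the involution property it is enough to treat one generator turning a double ascent into a double descent; in word form this move looks locally like $\cdots A\,w\,x\,B\cdots\mapsto\cdots A\,x\,w\,B\cdots$, where $x$ is the moved letter, $w=w_1\cdots w_m$ is the maximal block of letters smaller than $x$ immediately to its left, and $A>x$, $B>x$ are the flanking letters. The only pairs whose relative order changes are $(x,w_1),\dots,(x,w_m)$, which become $m$ new inversions; every other pair keeps its order, and between any two fixed outside positions the multiset of intervening letters is unchanged. The key point is that each new inversion $(x,w_j)$ is \emph{non}-admissible: its left neighbour $A$ (or $+\infty$) exceeds $x$, killing the first condition, while every letter strictly between $x$ and $w_j$ is one of $w_1,\dots,w_{j-1}<x$, killing the second. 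The remaining, more delicate, task is to show that no previously (non-)admissible inversion changes status; this reduces to checking that the two large flanking letters $A$ and $B$ continue to witness the second admissibility condition for exactly the same inversions as before, so that the local relabelling of left neighbours never flips an admissibility. Completing this case analysis over the few ways an inversion can meet the block $\{A,w,x,B\}$ establishes Lemma (A), and with it the theorem.
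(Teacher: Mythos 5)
Your proposal is correct and takes essentially the same route as the paper: it passes to the interpretation $A_n(t,1,q)=\sum_{\sigma}t^{\des(\sigma)}q^{\ai(\sigma)}$, proves that $\ai$ is invariant under the modified Foata--Strehl action, sums over orbits using the unique double-descent-free representative, and finishes with $\ai=\inv$ on permutations without double descents. The only differences are cosmetic: you analyze the generator in the double-ascent-to-double-descent direction rather than the reverse, and you leave the final case analysis for your Lemma (A) as a sketch, at about the same level of detail as the paper (which writes out only one of its six cases); the key points you do verify --- that the $m$ new inversions $(x,w_j)$ are non-admissible and that the flanking letters $A,B$ continue to serve as witnesses --- are exactly what makes that case analysis go through.
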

For $n=4$,  it follows from \eqref{D} that 
   \begin{align*}
   A_4(t,1,q)
   &=(1+t)^3+(2q+3q^2+2q^3+q^4)t(1+t)\\
   &=1+(3+2q+3q^2+2q^3+q^4)t+(3+2q+3q^2+2q^3+q^4)t^2+t^3.
   \end{align*}
   \begin{theorem}\label{main:result2}
 We have
 \begin{align}
  A_n(t,0,q)=\sum_{k=1}^{\lfloor n/2\rfloor}\biggl(\:\sum_{\sigma\in \widetilde{\D}_{n,k}}q^{\inv(\sigma)}\biggr)t^k(1+t)^{n-2k}.\label{der2}
\end{align}
  \end{theorem}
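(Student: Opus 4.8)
The plan is to run the same machine that proves Theorem~\ref{main:result}, but in the derangement range $r=0$, where the bookkeeping of the auxiliary statistic $\rix$ becomes the crux. First I would transport everything to the descent world by the triple equidistribution $(\exc,\fix,\maj)\sim(\des,\rix,\aid)$. At the level of generating functions this says
\begin{equation}\label{eq:r0model}
\sum_{\sigma\in\S_n}t^{\exc(\sigma)}r^{\fix(\sigma)}q^{\maj(\sigma)-\exc(\sigma)}=\sum_{\sigma\in\S_n}t^{\des(\sigma)}r^{\rix(\sigma)}q^{\aid(\sigma)-\des(\sigma)},
\end{equation}
and extracting the coefficient of $r^0$ on both sides reduces the problem to showing that $\sum_{\rix(\sigma)=0}t^{\des(\sigma)}q^{\ai(\sigma)}$ equals the right-hand side of \eqref{der2}, where $\ai:=\aid-\des$ is the admissible inversion number.

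Next I would set up Br\"and\'en's modified Foata--Strehl (valley-hopping) action $\{\varphi_x\}_{x\in[n]}$ on $\S_n$ and recall the two structural facts it shares with the proof of Theorem~\ref{main:result}. Each $\varphi_x$ fixes every peak and valley and only toggles the status of $x$ between double ascent and double descent, so that $\des$ moves by $\pm1$ while $\ai$ is unchanged; hence $\ai$ is constant on each orbit, and the $\des$-enumerator of the orbit of a permutation $\widehat\sigma$ with $\dd(\widehat\sigma)=0$ is $t^{\des(\widehat\sigma)}(1+t)^{\da(\widehat\sigma)}$. Moreover, on any $\sigma$ with $\dd(\sigma)=0$ every inversion is admissible, so that $\ai(\sigma)=\inv(\sigma)$; combined with the elementary counts $\peak=\des$, $\valley=\des+1$ and $\da=n-1-2\des$ valid when $\dd=0$, this already recovers \eqref{per} once the action is run on all of $\S_n$.

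The genuinely new ingredient is the restriction $\rix=0$. Since $\sigma_{n+1}=+\infty$, the canonical $\dd=0$ representative $\widehat\sigma$ of each orbit has $\widehat\sigma_{n-1}<\widehat\sigma_{n}$, i.e.\ position $n$ is a double ascent; but $\rix$ reads $\sigma$ from the right, and I would show that $\{\sigma:\rix(\sigma)=0\}$ is a union of orbits in which precisely this last double ascent is \emph{frozen}: inside $\{\rix=0\}$ the final slot is forced to behave as a double descent, so one of the $\da(\widehat\sigma)$ double ascents of the representative contributes a mandatory factor $t$ rather than $(1+t)$. Thus the representatives are exactly the permutations $\widehat\sigma$ with $\dd(\widehat\sigma)=0$ and $\widehat\sigma_{n-1}<\widehat\sigma_{n}$, namely the members of $\widetilde{\D}_{n,k}$ with $\des(\widehat\sigma)=k-1$ and $\da(\widehat\sigma)=n-2k+1$; the corresponding orbit contributes
$t\cdot t^{\,k-1}(1+t)^{\,\da(\widehat\sigma)-1}=t^{k}(1+t)^{\,n-2k}$.
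Summing over orbits and replacing $\ai(\widehat\sigma)$ by $\inv(\widehat\sigma)$ converts $\sum_{\rix(\sigma)=0}t^{\des(\sigma)}q^{\ai(\sigma)}$ into the right-hand side of \eqref{der2}.

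The main obstacle is exactly the compatibility of $\rix$ with valley hopping asserted in the previous paragraph: one must prove that $\{\rix=0\}$ is closed under the maps $\varphi_x$ moving letters away from the right end, that the double ascent pinned at position $n$ is the unique one excluded from toggling, and that the resulting $\dd=0$ representatives are precisely $\bigsqcup_k\widetilde{\D}_{n,k}$. The invariance $\ai(\varphi_x\sigma)=\ai(\sigma)$ and the identity $\ai=\inv$ on $\dd=0$ permutations are inherited from the proof of Theorem~\ref{main:result} and are comparatively routine; the delicate point is the $+1$ shift in the exponent of $t$ caused by the frozen double ascent, which I would pin down by analyzing how $\rix$ and the rightmost descent interact under the action, and which I would first verify on small cases such as $n=4$, where \eqref{der2} indeed yields $A_4(t,0,q)=t+(2+q+2q^2+q^3+q^4)t^2+t^3$.
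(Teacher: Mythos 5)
Your high-level architecture is the same as the paper's: pass to the statistics $(\des,\rix,\ai)$ via \eqref{des:ai}, extract the coefficient of $r^0$ to reduce to $\sum_{\rix(\sigma)=0}t^{\des(\sigma)}q^{\ai(\sigma)}$, run a valley-hopping argument with one coordinate frozen, and finish with the invariance of $\ai$ and the identity $\ai=\inv$ on permutations without double descents. You also land on the correct endpoint: representatives in $\widetilde{\D}_{n,k}$, each orbit contributing $t^k(1+t)^{n-2k}$. However, the step you yourself label ``the main obstacle'' is where essentially all of the content of the proof lives, and your working description of it is not merely unproved but inaccurate in ways that would derail an attempt to make it precise. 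First, $\{\sigma:\rix(\sigma)=0\}$ is not a union of orbits of the full MFS-action (already $\varphi'_2(21)=12$ sends $\rix=0$ to $\rix=2$), so a genuinely new $\Z_2^n$-action must be built; the paper does this by freezing the letter $\beta_1(\sigma)$ and the rixed points. Second, the frozen letter is not ``the final slot'': under the convention $\sigma_{n+1}=+\infty$, position $n$ is never a double descent, and in a generic $\sigma$ with $\rix(\sigma)=0$ the relevant letter $\beta_1(\sigma)$ sits as a double descent in the \emph{interior} of the word (e.g.\ the letter $7$ in $6\,1\,10\,8\,4\,9\,7\,2\,5\,3$); it only migrates to position $n$ after you un-hop it to produce the $\dd=0$ representative. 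Third, your proposed criterion for which hops preserve $\{\rix=0\}$ (``moving letters away from the right end'') is wrong: in the example above, $\varphi'_6$ and $\varphi'_8$ both move their letters rightward and preserve $\rix=0$, while $\varphi'_7=\varphi'_{\beta_1}$ destroys it; the correct criterion is exactly $x\neq\beta_1(\sigma)$.

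The missing idea is a structural description of $\rix$ that identifies the unique letter to freeze and proves stability. The paper supplies this via the rix-factorization $\sigma=\alpha_1\cdots\alpha_i\beta$: one shows that $\beta_1(\sigma)$ is a double descent whenever $\rix(\sigma)=0$, that each restricted hop $\varphi''_x$ only rearranges letters within a single factor and hence preserves $\beta_1$ and $\Rix$ (so $\{\rix=0\}$ is a union of restricted orbits, each of size exactly half the full orbit), and that the map $\sigma\mapsto\varphi'_{\beta_1(\sigma)}(\sigma)$ is a bijection from the set of representatives $\R^0_{n,k}=\{\sigma:\rix(\sigma)=0,\ \dd(\sigma)=1,\ \des(\sigma)=k\}$ onto $\widetilde{\D}_{n,k}$. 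Without this (or an equivalent) lemma, your central claim that ``precisely this last double ascent is frozen'' is an assertion of the theorem's hardest part rather than a proof of it, so as it stands the argument has a genuine gap.
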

  
For $n=4$, it follows from \eqref{DE} that 
   \begin{align*}
   A_4(t,0,q)&=t(1+t)^2+(q+2q^2+q^3+q^4)t^2\\
   &=t+(2+q+2q^2+q^3+q^4)t^2+t^3.
   \end{align*}
 
  Obviously Theorem~\ref{main:result} is a $q$-analogue of 
  Theorem~\ref{euler}.
 To show that  \eqref{der2} reduces to \eqref{eq:der}, we will give a bijection  
 from $\widetilde{\D}_{n,k}$ to $\DE_{n,k}$ in 
Remark~\ref{bijection}.
Since $\A_{2n+1}=\D_{2n+1,n}$ and $\A_{2n}= \widetilde{\D}_{2n,n}$,
 Theorems~\ref{main:result} and  \ref{main:result2} with $t=-1$ reduce immediately to
 Theorem~\ref{FH}.

   \begin{remark}    
   For any $\sigma\in \S_n$ let 
$\Des(\sigma)=\{i\in [n-1]: \sigma_i>\sigma_{i+1}\}$
be its descent set. 
According to a result of Foata and Sch\"utzenberger~\cite{fs2}, 
 for any $S\subseteq[n-1]$, 
 $$
 \sum q^{\inv(\sigma)}= \sum  q^{\imaj(\sigma)}\qquad \quad(\sigma\in\S_n \quad \textrm{and}\quad \Des(\sigma)=S),
 $$
 where $\imaj(\sigma)=\maj(\sigma^{-1})$.
 Hence we can replace the statistic $\inv$ by $\imaj$ in 
 Theorems~\ref{main:result} and~\ref{main:result2}.
 \end{remark}

 We will prove Theorems~\ref{main:result} and \ref{main:result2}   
 by making use of   Br\"and\'en's  modified Foata--Strehl  action, \cite{fsh, br} (see also\cite{swg}),  and an alternative interpretation of the basic Eulerian polynomials  involving the descent statistic, that we recall below.

\begin{definition}
  Let $w=w_1w_2\cdots w_k$ be a word of length $k$ with distinct letters from $[n]$.  An \emph{admissible inversion} of $w$ is a pair $(w_i,w_j)$ such that $1\leq i<j\leq k$ and $w_i>w_j$ and satisfies either of the following conditions:
 \begin{itemize}
 \item $1<i$ and $w_{i-1}<w_i$ or 
 \item there is some $l$ such that $i<l<j$ and $w_i<w_l$.
 \end{itemize}
Let $\ai(w)$ be the number of admissible inversions of $w$.
The statistic $``\rix"$ is defined 
  recursively as follows: $\rix(\emptyset)=0$, 
   if $w_i=\max\{w_1,w_2,\ldots,w_k\}$ \textup{(}$1\leq i\leq k$\textup{)}, then
$$
 \rix(w):=
 \begin{cases}
0,&\text{if $i=1< k$;}\\
1+\rix(w_1w_2\cdots w_{k-1}),&\text{if $i=k$};\\
\rix(w_{i+1}w_{i+2}\cdots w_k),& \text{if $1<i<k$.}
\end{cases}
$$
\end{definition}
  Note that $\rix(w_1)=1$. If $w=291753468$,  then there are $14$ inversions, except  $(5,3)$ and $(5,4)$,  all others  are admissible, hence  $\ai(w)=12$;
  for the computation of $\rix(w)$, by looking for
   the greatest letters  in the words successively
  we have 
   $$
   \rix(w)=\rix(175346{\bf8})=1+\rix(1{\bf7}5346)=1+\rix(534{\bf6})=2+\rix({\bf5}34)=2.
   $$
  
  We will need  the following
   interpretation of the basic Eulerian polynomials~\cite[Theorem~8]{lin} (see also \cite{bu} for an equivalent version), of which  the special  $r=1$ case was  first proved in 
\cite{lsw} using the Rees product of posets.
   \begin{lemma}[\hspace{-0.5pt}\cite{sw,lin,bu}] One has 
 \begin{equation}\label{des:ai}
 A_n(t,r,q)=\sum_{\sigma\in \S_n}t^{\des(\sigma)}r^{\rix(\sigma)}q^{\ai(\sigma)}.
 \end{equation}
 \end{lemma}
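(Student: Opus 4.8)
The assertion \eqref{des:ai} amounts to the triple equidistribution
$$
\sum_{\sigma\in\S_n}t^{\des(\sigma)}r^{\rix(\sigma)}q^{\ai(\sigma)}=\sum_{\sigma\in\S_n}t^{\exc(\sigma)}r^{\fix(\sigma)}q^{\maj(\sigma)-\exc(\sigma)},
$$
the right-hand side being the definition of $A_n(t,r,q)$. Since $\exc$ and $\des$ are both Eulerian, the two-variable content (setting $r=q=1$) is classical; the real work is to realise $(\fix,\maj-\exc)$ and $(\rix,\ai)$ jointly, on top of the correspondence $\exc\leftrightarrow\des$. The plan is to produce a bijection $\Phi\colon\S_n\to\S_n$ carrying $(\exc,\fix,\maj-\exc)$ to $(\des,\rix,\ai)$, guided by the recursive shape of the right-hand statistics.

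I would drive the argument by the behaviour of $\des$, $\rix$ and $\ai$ under deletion of the largest letter $n$, which is transparent: if $n$ stands in the last position then $\rix$ drops by $1$ while $\des$ and $\ai$ are unchanged; if $n$ stands in the first position (with $n\ge2$) then $\rix=0$, exactly one descent is created, and $\ai$ is again unchanged; and if $n$ is interior then $\rix$ is determined by the factor to the right of $n$ alone, while $\des$ and $\ai$ change in a more intricate, position-dependent way. These rules, together with the base values $\rix(1)=1$ and $\des(1)=\ai(1)=0$, determine the distribution of $(\des,\rix,\ai)$ recursively.

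The core of the proof is then to show that the basic Eulerian polynomial obeys the matching recursion. Here I would use a fundamental-type transformation that is known to convert excedances into descents while respecting cycle structure, so that the fixed points of $\sigma$ become precisely the letters isolated by the $\rix$-recursion; this is what should yield $\fix\mapsto\rix$. For the $q$-part I would lean on the Eulerian quasisymmetric function recursion of Shareshian--Wachs, which already encodes the $r=1$ identity $A_n(t,1,q)=\sum_\sigma t^{\des(\sigma)}q^{\ai(\sigma)}$ of \cite{lsw}, and then overlay the fixed-point bookkeeping to pass from $\maj$ to $\maj-\exc$ and simultaneously record $\rix$.

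I expect the main obstacle to be the coupling $\maj-\exc\leftrightarrow\ai$: because $\maj$ is a sum of descent positions while $\ai$ counts inversions subject to the two admissibility conditions, there is no termwise matching, and one must instead verify that the increments of $\maj-\exc$ under insertion of $n$ agree, case by case, with the number of newly admissible inversions---the interior-insertion case, where $\rix$ resets to the right factor, being the delicate one. As an independent safeguard I would isolate the two refinements, first confirming $\fix\mapsto\rix$ in the absence of the $q$-weight and then confirming $\maj-\exc\mapsto\ai$ at $r=1$ via \cite{lsw}, and only afterwards combine them; matching the two recursions with the common base case $n=1$, where both sides equal $r$, would complete the induction.
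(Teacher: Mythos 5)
First, a point of reference: the paper does not prove this lemma at all --- it is imported from \cite{sw,lin,bu}, with the $r=1$ case attributed to \cite{lsw} (via Rees products of posets) and the general case to \cite[Theorem~8]{lin}. So there is no internal proof to match your argument against; what must be judged is whether your proposal would stand on its own.

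As written it does not: it is an outline in which every hard step is deferred. Your structural observations about $(\des,\rix,\ai)$ under removal of the maximal letter are correct --- if $n$ is last then $\rix$ drops by $1$ and $\des$, $\ai$ are unchanged; if $n$ is first then $\rix=0$, exactly one descent and no admissible inversion are contributed; if $n$ is interior then $\rix$ is inherited from the right factor --- and this is indeed the right skeleton, since it is what makes both sides satisfy a convolution recurrence of the type \eqref{recurrence2}. But the two places where the proof actually lives are left blank. First, the ``fundamental-type transformation carrying excedances to descents and fixed points to the letters isolated by the $\rix$-recursion'' is never constructed; note that such a map cannot be expected to respect the finer set-valued data, since the paper itself remarks that $(\Fix,\maj)$ and $(\Rix,\aid)$ are already not equidistributed on $\S_3$, and the paper's own bijection $\Phi$ of Proposition~\ref{Phi} matches $\des$ with $\exc$ and $\Rix$ with $\Fix$ but says nothing about $\ai$ versus $\maj-\exc$. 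Second, the interior-insertion case for $\ai$, which you yourself flag as ``the delicate one,'' is precisely where the identity is proved or lost: inserting $n$ at an interior position $i$ of a word of length $k$ not only creates $k-i$ new admissible inversions but can also promote previously inadmissible inversions $(w_a,w_b)$ with $a<i\le b$ to admissible ones via the new intermediate letter at position $i$, and one must show that the total increment, summed over the choice of letters placed to the left of $n$, reproduces the weight $q^j{n\brack j}_q$ appearing in \eqref{recurrence2}. Until that computation is carried out, the proposal is a statement of intent rather than a proof; completing it would essentially reproduce the argument of \cite{lin}.
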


  The rest of this paper is organized as follows. 
  In Section~\ref{sec:main}, after recalling 
 the Modified Foata--Strehl  action, we prove
Theorem~\ref{main:result}.
   In Section~\ref{sec:fac}, we introduce  an alternative algorithm to 
   compute  the statistic $\rix$, which connects  another model $\R^{0}_{n,k}$ (see \eqref{newmodel}) to 
$\DE_{n,k}$.  Proof of Theorem~\ref{main:result2} will be given in Section~\ref{sec:main2}.
  We derive the recurrence relations and the generating functions of the $\gamma$-coefficients 
  in \eqref{per} and \eqref{der2} in Section~\ref{sec:gamma-coeff}.
   We conclude the paper with some further remarks in Section~\ref{sec:conclusion}.

%%%%%%%%%%%%%%%%%%%%%%%%
\section{Proof of Theorem~\ref{main:result}}\label{sec:main}
%%%%%%%%%%%%%%%%%%%%%%%%%%%%%%
Let $\sigma\in\S_n$, for  any  $x\in[n]$, the {\em$x$-factorization} of $\sigma$ reads
$
\sigma=w_1 w_2x w_3 w_4,
$
 where $w_2$ (resp.~$w_3$) is the maximal contiguous subword immediately to the left (resp.~right)  of $x$ whose letters are all smaller than $x$.  Following Foata and Strehl~\cite{fsh} we define the action $\varphi_x$ by
 $$
 \varphi_x(\sigma)=w_1 w_3x w_2 w_4.
 $$
 For instance, if $x=4$ and $\sigma=2743156\in\S_7$, then $w_1=27,w_2=\emptyset,w_3=31$ and $w_4=56$.
 Thus $\varphi_x(\sigma)=2731456$.
 Clearly, $\varphi_x$ is an involution acting on $\S_n$ and it is not hard to see that $\varphi_x$ and $\varphi_y$ commute for all $x,y\in[n]$. Br\"and\'en~\cite{br} modified $\varphi_x$ to be 
 $$ \varphi'_x(\sigma):=
 \begin{cases}
\varphi_x(\sigma),&\text{if $x$ is a double ascent or double descent of $\sigma$};\\
\sigma,& \text{if $x$ is a valley or a peak of $\sigma$.}
\end{cases}
$$
Again it is clear that $\varphi'_x$'s are involutions and  commute. For any subset $S\subseteq[n]$ we can then define the function $\varphi'_S :\S_n\rightarrow\S_n$ by 
$$
\varphi'_S(\sigma)=\prod_{x\in S}\varphi'_x(\sigma).
$$
Hence the group $\Z_2^n$ acts on $\S_n$ via the functions $\varphi'_S$, $S\subseteq[n]$. This action will be called  the {\em Modified Foata--Strehl action} ({\em MFS-action} for short)
% or the {\em valley-hopping action}~\cite{pe} 
 as depicted in Fig.~\ref{valhop}. We first show that the statistic $``\ai"$ is invariant under the MFS-action.

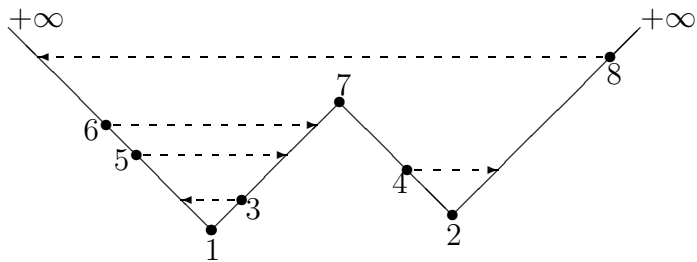
\begin{figure}
\setlength {\unitlength} {0.8mm}
\begin {picture} (90,40) \setlength {\unitlength} {1mm}
\thinlines
\put(24,8){\dashline{1}(-1,0)(-8,0)}%3
\put(16,8){\vector(-1,0){0.1}}

\put(10,14){\dashline{1}(1,0)(20,0)}%5
\put(30,14){\vector(1,0){0.1}}

\put(6,18){\dashline{1}(1,0)(28,0)}%6
\put(34,18){\vector(1,0){0.1}}

\put(46,12){\dashline{1}(1,0)(12,0)}%4
\put(58,12){\vector(1,0){0.1}}

\put(73,27){\dashline{1}(-1,0)(-76,0)}%8
\put(-3,27){\vector(-1,0){0.1}}

\put(20,4){\line(-1,1){27}}\put(-7,31){$+\infty$}
\put(10,14){\circle*{1.3}}\put(7,12){$5$}
\put(6,18){\circle*{1.3}}\put(3,16){$6$}
\put(20,4){\circle*{1.3}}
\put(20,4){\circle*{1.3}}\put(19.1,0){$1$}
\put(24,8){\circle*{1.3}}\put(24.5,5.5){$3$}
\put(52,6){\circle*{1.3}}\put(51.2,2){$2$}
\put(20,4){\line(1,1){17}}\put(37,21){\circle*{1.3}}
\put(37,21){\line(1,-1){15}}\put(36.5,22){$7$}
\put(46,12){\circle*{1.3}}\put(44,9){$4$}
\put(52,6){\line(1,1){25}}\put(73,27){\circle*{1.3}}\put(72.5,23.5){$8$}
\put(77,31){$+\infty$}
\end{picture}
\caption{MFS-actions on $65137428$
\label{valhop}}
\end {figure}

 \begin{lemma}\label{lem:act}
 Let $\sigma\in\S_n$.  For each $x\in[n]$, we have 
  $\ai(\sigma)=\ai(\varphi'_x(\sigma))$.
  \end{lemma}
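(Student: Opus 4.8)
The plan is to reduce to one genuinely nontrivial configuration and then check that admissibility is preserved one inversion at a time. If $x$ is a valley or a peak of $\sigma$ then $\varphi'_x(\sigma)=\sigma$ and there is nothing to prove, so I may assume $x$ is a double ascent or a double descent. Since $\varphi'_x$ is an involution that interchanges double ascents with double descents, by symmetry it suffices to treat the double-ascent case. Writing the $x$-factorization $\sigma=w_1w_2xw_3w_4$, the double-ascent hypothesis means the letter immediately to the right of $x$ exceeds $x$, so $w_3=\emptyset$ and $\varphi'_x(\sigma)=w_1xw_2w_4$; that is, $x$ hops leftwards over the block $w_2$. Abbreviating $A=w_1$, $B=w_2=b_1\cdots b_m$, $D=w_4$, the move is $ABxD\mapsto AxBD$. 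Three facts will drive the argument: every letter of $B$ is smaller than $x$; if $A\neq\emptyset$ its last letter $\alpha$ satisfies $\alpha>x$, by the maximality of $w_2$; and if $D\neq\emptyset$ its first letter $d_1$ satisfies $d_1>x$, because $x$ is a double ascent.

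Next I would exploit that only $x$ changes position, so the relative order of all other letters, and hence the set of inversions not involving $x$, is unchanged. For an inversion $(p,q)$ (with $p>q$ and $p$ before $q$) its admissibility can flip only if the first clause (the comparison of $p$ with its left neighbour) flips, or the second clause (the existence of a letter larger than $p$ strictly between $p$ and $q$) flips. The only letters whose left neighbour changes are $b_1$, $x$ and $d_1$, and a short comparison shows the verdict of the first clause flips only for $p=x$: for $p=b_1$ the neighbour changes from $\alpha$ to $x$, both exceeding $b_1$; for $p=d_1$ it changes from $x$ to $b_m$, both below $d_1$; and for $p=x$ it changes from $b_m<x$ to $\alpha>x$ (or to none, if $x$ now leads), so the first clause passes from true to false. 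The second clause can flip only when $x$ enters or leaves the open gap between $p$ and $q$ while $x>p$, which happens exactly for $p\in A$, $p<x$, $q\in B$ ($x$ enters), and for $p\in B$, $q\in D$ ($x$ leaves).

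The heart of the proof is that the two flanking maxima $\alpha$ and $d_1$ neutralize each of these potential flips. For an $x$-entering pair $p\in A$, $p<x$, $q\in B$, the letter $\alpha$ already lies strictly between $p$ and $q$ in both words and $\alpha>x>p$, so the second clause holds throughout and nothing changes. For an $x$-leaving pair $p\in B$, $q\in D$ (which forces $q<p<x$), the letter $d_1$ lies strictly between $p$ and $q$ in both words with $d_1>x>p$, so again the second clause persists. For the genuine first-clause flip at $p=x$, the relevant inversions are $(x,q)$: if $q\in D$ then $q<x<d_1$ and $d_1$ sits strictly between $x$ and $q$ in both words, so the second clause keeps the pair admissible before and after irrespective of the first clause; if $q\in B$ then $(x,q)$ is a brand-new inversion, present only after the move, and it is non-admissible there, since the left neighbour of $x$ is $\alpha>x$ (or $x$ leads) and every letter between $x$ and $q$ lies in $B$ and is smaller than $x$. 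A new inversion that is non-admissible is never counted, so it does not affect $\ai$. Collecting these statements gives $\ai(ABxD)=\ai(AxBD)$.

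I expect the main obstacle to be precisely the first-clause flip at $p=x$: superficially $(x,q)$ loses its justification when $x$ is shifted next to the large letter $\alpha$, which would make $\ai$ drop, and one must see why this loss never materializes. The resolution is that the double-ascent hypothesis guarantees a large letter $d_1>x$ immediately to the right of the hopped block, and this $d_1$ lies between $x$ and every later smaller letter, reinstating admissibility through the second clause. The remaining care is bookkeeping: I would dispose of the degenerate cases $A=\emptyset$ (no $\alpha$, but then the $x$-entering family is empty and $x$ simply leads after the move) and $D=\emptyset$ (no $d_1$, but then the $x$-leaving family and the pairs $(x,q)$ with $q\in D$ are empty), and finally invoke the involution to transfer the conclusion to the double-descent case.
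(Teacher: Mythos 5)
Your proof is correct and follows essentially the same route as the paper's: a direct verification that the hop preserves the admissibility status of every inversion, with the inversions created between $x$ and the hopped block shown to be non-admissible; the paper simply treats the mirror (double-descent) configuration and splits into six positional cases rather than organizing by which admissibility clause can flip. The only pairs your classification does not explicitly name are the inversions $(y,x)$ with $y\in A$ and $y>x$, whose gap loses the whole block $B$ rather than just $x$; since every letter of $B$ is below $x<y$, no witness is lost, so this is a one-line addition rather than a genuine gap.
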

 \begin{proof}
 If $x$ is a peak or a valley of $\sigma$, then $\varphi'_x(\sigma)=\sigma$ and the result is true. If $x$ is a double descent of $\sigma$, then $w_2$ is empty in the $x$-factorization of 
 $\sigma=w_1xw_3w_4$ and there is no admissible inversions of $\sigma$ between $x$ and the word $w_3$. As $\varphi'_x(\sigma)=w_1w_3xw_4$,
  there is no inversions of $\varphi'_x(\sigma)$ between the word $w_3$ and $x$. Let $(\sigma_i,\sigma_j)\notin\{(x,y) : \text{$y$ is a letter in $w_3$}\}$ be a pair of $\sigma$ such that $i<j$. We claim that $(\sigma_i,\sigma_j)$ is an admissible inversion of $\sigma$ if and only if it is an  admissible inversion of $\varphi'_x(\sigma)$, from which the result follows. 
  
  For a word $w$, we write $a\in w$ if $a$ is a letter in $w$. To check the claim, there are $6$ cases to be considered: (1) $\sigma_i\in w_1$ and $\sigma_j\in w_1$; (2) $\sigma_i\in w_1$ and $\sigma_j\in xw_3$; (3) $\sigma_i\in w_1$ and $\sigma_j\in w_4$; (4) $\sigma_i\in w_3$ and $\sigma_j\in w_3$; (5) $\sigma_i\in xw_3$ and $\sigma_j\in w_4$; (6) $\sigma_i\in w_4$ and $\sigma_j\in w_4$. We will only show case (2), other cases are similar.  If $(\sigma_i,\sigma_j)$ is an admissible inversion of $\sigma$, then $\sigma_{i-1}<\sigma_i>\sigma_j$ or $\sigma_j<\sigma_i<\sigma_k$ for some $i<k<j$. Clearly, $(\sigma_i,\sigma_j)$ is an admissible of $\varphi'_x(\sigma)$ if $\sigma_k\ne x$. Otherwise if $\sigma_k=x$, then we denote $x'$ the last letter of $w_1$ and consider the triple $(\sigma_i,x',\sigma_j)$. This indicates that $(\sigma_i,\sigma_j)$ is an admissible inversion of $\varphi'_x(\sigma)$, since $x'>x>\sigma_i$. To show that, if $(\sigma_i,\sigma_j)$ is an admissible inversion of $\varphi'(\sigma)$ then $(\sigma_i,\sigma_j)$ is an admissible inversion of $\sigma$, is similar and we omit. This finishes the proof of our claim in case (2).
 \end{proof}

 For any  permutation $\sigma\in\S_n$,
    let $\Orb(\sigma)=\{g(\sigma): g\in\Z_2^n\}$ be the orbit of $\sigma$ under the MFS-action. 
The MFS-action divides the set $\S_n$ into disjoint orbits. 
 Moreover, for $\sigma\in \S_n$, if $x$ is a double descent of $\sigma$, then 
 $x$ is a double ascent of $\varphi'_x(\sigma)$.
 Hence, there is a unique  
  permutation in each orbit which has no 
 double descent.
 Now, let 
  $\bar\sigma$ be such a unique element  in $\Orb(\sigma)$, then
  $\da(\bar{\sigma})=n-\peak(\bar{\sigma})-\valley(\bar{\sigma})$ and 
   $\des(\bar{\sigma})=\peak(\bar{\sigma})=\valley(\bar{\sigma})-1$.
Thus
$$
\sum_{\pi\in\Orb( \sigma)}q^{\ai(\pi)}t^{\des(\pi)}=
q^{\ai(\bar{\sigma})}t^{\des(\bar{\sigma})}(1+t)^{\da(\bar{\sigma})}
=q^{\ai(\bar{\sigma})}t^{\des(\bar{\sigma})}(1+t)^{n-1-2\des(\bar{\sigma})}.
$$
Therefore, by \eqref {des:ai}, we have 
$$
A_n(t,1,q)=\sum_{\sigma\in \S_n}t^{\des(\sigma)}q^{\ai(\sigma)}=\sum_{k=0}^{\lfloor(n-1)/2\rfloor}\biggl(\:\sum_{\sigma\in \D_{n,k}}p^{\ai(\sigma)}\biggr)t^k(1+t)^{n-1-2k}.
$$
Thus Theorem~\ref{main:result} is a consequence of  
the following result.

 \begin{lemma}\label{lem:inv}
 For each $\sigma\in \D_{n,k}$, we have
 $
 \ai(\sigma)=\inv(\sigma).
 $
 \end{lemma}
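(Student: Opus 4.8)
The plan is to reduce the claimed equality to a one-sided inclusion. Every admissible inversion is in particular an inversion, so trivially $\ai(\sigma)\le\inv(\sigma)$ for all $\sigma$; thus it suffices to prove the reverse inequality by showing that, under the hypothesis $\dd(\sigma)=0$, \emph{every} inversion of $\sigma$ is admissible. (This in fact holds for all double-descent-free permutations, not only those with $\des(\sigma)=k$, so the restriction to $\D_{n,k}$ is immaterial.) I would fix an arbitrary inversion $(\sigma_i,\sigma_j)$, meaning $i<j$ and $\sigma_i>\sigma_j$, and verify that one of the two defining conditions of admissibility holds, keeping in mind the boundary convention $\sigma_0=\sigma_{n+1}=+\infty$ used to define $\dd$.

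First I would split into cases according to the relation between $\sigma_{i-1}$ and $\sigma_i$. If $i>1$ and $\sigma_{i-1}<\sigma_i$, then the first admissibility condition holds outright and nothing more is needed. The substantive case is $\sigma_{i-1}>\sigma_i$, which also subsumes $i=1$ since $\sigma_0=+\infty>\sigma_1$. Here the hypothesis $\dd(\sigma)=0$ does the essential work: position $i$ cannot be a double descent, so from $\sigma_{i-1}>\sigma_i$ we are forced to have $\sigma_i<\sigma_{i+1}$, that is, $i$ is a valley. I then claim $j>i+1$, for otherwise $j=i+1$ would give $\sigma_i>\sigma_{i+1}$, contradicting $\sigma_i<\sigma_{i+1}$. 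Consequently the index $l=i+1$ satisfies $i<l<j$ and $\sigma_i<\sigma_l$, so the second admissibility condition holds. This shows $(\sigma_i,\sigma_j)$ is admissible in every case, yielding $\inv(\sigma)\le\ai(\sigma)$ and hence equality.

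The only point requiring care — more a bookkeeping subtlety than a genuine obstacle — is the interplay of the two different boundary conventions. Admissibility is defined on the bare word $\sigma_1\cdots\sigma_n$, and its first clause explicitly requires $1<i$, so no $\sigma_0$ is ever invoked there; by contrast $\dd$ is defined with $\sigma_0=\sigma_{n+1}=+\infty$. I would check that the argument degrades gracefully at the two ends. When $i=1$, ``position $1$ is not a double descent'' reads $\sigma_1<\sigma_2$ precisely because $\sigma_0=+\infty$, which is exactly the valley conclusion exploited above; and when $i=n$ the branch $\sigma_{i-1}>\sigma_i$ is vacuous, since no index $j>n$ exists. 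With these boundary verifications in place, the single valley-witness construction covers all inversions uniformly, completing the proof.
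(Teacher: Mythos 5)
Your proposal is correct and follows essentially the same route as the paper: both reduce to showing $\inv(\sigma)\le\ai(\sigma)$ by checking that every inversion $(\sigma_i,\sigma_j)$ is admissible, splitting on whether $\sigma_{i-1}<\sigma_i$ or $\sigma_{i-1}>\sigma_i$ and using the absence of double descents to force $\sigma_i<\sigma_{i+1}$ in the latter case. Your write-up is in fact a bit more careful than the paper's, since you name the witness $l=i+1$ explicitly, verify $j>i+1$, and spell out the boundary conventions at $i=1$ and $i=n$.
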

  \begin{proof}
  Let $\sigma\in\S_n$ be a permutation without double descent. Let $(\sigma_i,\sigma_j)$ be an inversion of $\sigma$. If $i=1$, then $\sigma_1<\sigma_{2}$ and so $(\sigma_i,\sigma_j)$ is an admissible inversion of $\sigma$. Assume that  $i\geq2$.
If $\sigma_{i-1}<\sigma_i$ then 
$(\sigma_i,\sigma_j)$ is  an admissible inversion of $\sigma$;
if $\sigma_{i-1}>\sigma_{i}$, then we must have $\sigma_{i+1}>\sigma_i$, otherwise $\sigma_i$ will be a double descent of $\sigma$, hence  $(\sigma_i,\sigma_j)$ is also an admissible inversion of $\sigma$. This shows  that $\ai(\sigma)\geq\inv(\sigma)$. As 
 $\ai(\sigma)\leq\inv(\sigma)$ by definition, we are done.
 \end{proof}

%%%%%%%%%%%%%%%%%%%%%%%%%%%%
\section{Rix-factorization}\label{sec:fac}
We first give a slightly less recursive description  of the $``\rix"$ statistic. Let  $w=w_1\cdots w_k$ be  a word with distinct letters over the alphabet $[n]$.  We say that $w_i$  is a {\em descent top} of $w$ if $w_i>w_{i+1}$. If $k=1$ the word $w$
is called  a {\LG},  if $k\geq2$ the word $w$  is called a {\LG} (resp.~{\FG}) 
if the last (resp. first) letter $w_k$ (resp.~$w_1$) is the 
greatest letter of $w$.

\begin{definition}[Rix--factorization]
Each permutation $\sigma\in\S_n$ can be factorized as
\begin{equation}\label{quasi}
\sigma=\alpha_1\alpha_2\cdots\alpha_{i}\beta,
\end{equation}
where each $\alpha_j$ ($1\leq j\leq i$) is a {\LG} of length $\geq2$ (possibly empty)  and $\beta$ is  a {\LG} or \FG,
by applying the following algorithm: 
\begin{itemize}
\item[(i)] $w\leftarrow\sigma$; $i\leftarrow0$;
\item[(ii)] if $w$ is an increasing word, let $\beta=w$ and we get~\eqref{quasi}; 
otherwise,   $i\leftarrow i+1$, let $x$ be the greatest descent  top  of $w$ and write 
 $w=w'xw''$ for some subwords $w',w''$;
\item[(iii)] if $w'=\emptyset$, let $\beta=w$ and 
we get~\eqref{quasi}; otherwise,  let $\alpha_i=w'x$, 
$w\leftarrow w''$ and go to (ii).
 \end{itemize}
 The factorization  \eqref{quasi} will be called the {\em rix-factorization} of $\sigma$.
 Denote by $\beta_1(\sigma)$ the first letter of $\beta$.
Then a letter in the maximal increasing suffix 
of $\beta$ that is not smaller than $\beta_1(\sigma)$ will be called a {\em rixed point} of $\sigma$. Denote by $\Rix(\sigma)$ the set of all rixed points of $\sigma$.
 \end{definition}
 For example, the algorithm gives 
\begin{align*}
\sigma :&=2\,1\,8\,7\,9\,3\,5\,4\,6\,10=2\,1\,8\,7\,9|3\,5|\beta \quad 
\text{with}\quad \beta=4\,6\,10\;(\text{\LG}),\\
\tau:&=6\,1\,10\,8\,4\,9\,7\,2\,5\,3=6\,1\,10|8\,4\,9|\beta\quad\,\text{with}\quad 
\beta=7\,2\,5\,3 \;(\text{\FG}),\\
\pi:&=1\,10\,4\,7\,6\,2\,5\,3\,8\,9=1\,10|\,4\,7|\beta\qquad\,\,\text{with}\quad 
\beta=6\,2\,5\,3\,8\,9 \;(\text{\LG}).
\end{align*}
We have 
\begin{align*}
\beta_1(\sigma)&=4,\quad \Rix(\sigma)=\{4,6,10\};\\
\beta_1(\tau)&=7, \quad \Rix(\tau)=\emptyset;\\
\beta_1(\pi)&=6,\quad \Rix(\pi)=\{8,9\}.
\end{align*}

\begin{proposition}\label{pro} Let  $\sigma\in\S_n$ with 
%let $\sigma=\alpha_1\alpha_2\cdots\alpha_{i}\beta$ be its
 rix-factorization \eqref{quasi}. If  $x_k$ is the last letter  of $\alpha_k$ ($1\leq k\leq i$), then 
\begin{itemize}
\item[(i)] $x_1>x_2>\cdots>x_i>\beta_1(\sigma)$.
\item[(ii)] $\rix(\sigma)=0$ if and only if $\beta$ is an \FG.
\item[(iii)] $\rix(\sigma)=|\Rix(\sigma)|$. 
\end{itemize}
\end{proposition}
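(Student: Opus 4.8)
The plan is to prove the three assertions in the order (i), (iii), (ii), working throughout with the word-version of each statement: both the recursive statistic $\rix$ and the rix-factorization make sense for any word with distinct letters, and the natural induction runs over such words rather than over $\S_n$ (since the recursion for $\rix$ and the algorithm both pass to suffixes, which are words, not permutations).

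For (i), I would use the single fact that the descent tops of a suffix $v$ of a word $w$ are exactly the descent tops of $w$ lying in $v$ (adjacent letters agree). Writing $w^{(k)}$ for the word at the start of the $k$-th pass of the algorithm, so that $w^{(k+1)}$ is the portion of $w^{(k)}$ to the right of $x_k$, the letter $x_{k+1}$ is the greatest descent top of $w^{(k+1)}$, hence a descent top of $w^{(k)}$; since $x_k$ is the greatest descent top of $w^{(k)}$ and $x_k\notin w^{(k+1)}$, we get $x_k>x_{k+1}$. Finally $\beta_1(\sigma)$ is the letter immediately following $x_i$ in $w^{(i)}$, and $x_i$ is a descent top there, so $x_i>\beta_1(\sigma)$.

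For (iii), I would prove $\rix(w)=|\Rix(w)|$ for every word $w$ with distinct letters by strong induction on the length, splitting on the position of the largest letter $M$ according to the three branches of the recursive definition of $\rix$. If $M$ is the first letter, then $w$ is its own terminal hook (an \FG) and both sides vanish. If $M$ is interior then $M=x_1$, the block $\alpha_1$ ends at $M$, and passing to the suffix $w^{(2)}$ after $M$ leaves the same terminal hook; since $\rix(w)=\rix(w^{(2)})$ and $\Rix(w)=\Rix(w^{(2)})$, the inductive hypothesis applies. The essential case is $M$ the last letter: here $\rix(w)=1+\rix(w\setminus M)$ by definition, the terminal hook $\beta$ is an \LG ending in $M$, and I must show $|\Rix(w)|=1+|\Rix(w\setminus M)|$.

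The crux, which I expect to be the main obstacle, is an observation about deleting the trailing maximum from a terminal hook. If $\beta$ is a terminal hook which is a non-increasing \LG, then its greatest descent top is exactly $\beta_1(\sigma)$ (this is precisely stopping condition (iii) of the algorithm). Deleting $M$ from the end changes no descent tops, since the predecessor of $M$ is smaller than $M$, so it is not a descent top and remains so as the new last letter; hence the greatest descent top of $\beta\setminus M$ is again $\beta_1(\sigma)$, its first letter, so $\beta\setminus M$ is itself a terminal hook with the \emph{same} first letter and with maximal increasing suffix obtained from that of $\beta$ by deleting $M$. Consequently the rix-factorization of $w\setminus M$ is $\alpha_1\cdots\alpha_i(\beta\setminus M)$, the threshold $\beta_1(\sigma)$ selecting rixed points is unchanged, and exactly one rixed point (namely $M$) is lost; the increasing case is immediate, and the only remaining point is that deleting $M$ does not disturb the peeling of $\alpha_1,\dots,\alpha_i$, which again follows from invariance of the descent-top set. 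I would stress that the delicacy is that for a \emph{general} word the first letter of the terminal hook can jump when the trailing maximum is deleted; it is the terminal-hook hypothesis (greatest descent top $=\beta_1(\sigma)$) that rules this out and keeps the rixed-point threshold fixed. Finally (ii) follows from (iii) and the \FG/\LG dichotomy: an \FG has $\beta_1(\sigma)$ as its greatest letter, so no letter of its increasing suffix meets the threshold and $\Rix(\sigma)=\emptyset$, whereas an \LG of length $\ge 2$ has $M>\beta_1(\sigma)$ in its increasing suffix, so $M\in\Rix(\sigma)$; thus $\rix(\sigma)=|\Rix(\sigma)|=0$ exactly when $\beta$ is an \FG.
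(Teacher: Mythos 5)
Your proof is correct and follows essentially the same route as the paper, which likewise establishes (ii) and (iii) by tracing the successive maxima encountered by the $\rix$ recursion and matching them against $\Rix(\sigma)\cup\{x_1,\dots,x_i,\beta_1(\sigma)\}$. Your induction on word length, with the deletion-of-the-trailing-maximum lemma showing the rix-factorization and the threshold $\beta_1(\sigma)$ are preserved, simply makes rigorous the step the paper asserts without detail.
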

\begin{proof} 
(i)  For $k=1,\ldots, i$,  since $x_k$ is the greatest descent top of $\alpha_k\cdots\alpha_{i}\beta$,
it  is obvious that $x_k>x_{k+1}$, where $x_{i+1}=\beta_1(\sigma)$.
(ii) If $\beta$ is an \FG{} (so the length of $\beta$ is at least 2),  when applying  the algorithmic definition to $\sigma$ for  computing 
the statistic $``\rix"$,
 the successive greatest letters are $x_1, x_2, \ldots, x_i$ and $\beta_1(\sigma)$, 
so $\rix(\sigma)=0$ because $\beta_1(\sigma)$ is the first letter of $\beta$. The "only if" part will follow from  
(iii). Indeed, if $\beta$ is  an \LG, when applying  the algorithmic definition to $\sigma$ for  computing 
the statistic $``\rix"$, 
the geatest letters are the letters in 
$$
\Rix(\sigma)\cup \{x_1, x_2, \ldots, x_i,  \beta_1(\sigma)\},
$$ 
read in decreasing order, but only the letters in  
$\Rix(\sigma)$ will contribute to the statistic $``\rix"$.
\end{proof}

 The rix-factorization is reminiscent  of Gessel's hook factorization~\cite{ge}, which decomposes each permutation into pieces of  hooks, uniquely.  Indeed we have the following result.
 \begin{proposition}
 The rix-factorization~\eqref{quasi} is unique provided  condition~(i) in Proposition~\ref{pro} holds and $\beta_1(\sigma)$ is the greatest descent top of $\beta$ (whenever $\beta$ is not an increasing word).
 \end{proposition}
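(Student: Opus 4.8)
The plan is to prove uniqueness by showing that \emph{any} factorization $\sigma=\alpha_1\cdots\alpha_i\beta$ of the prescribed shape (each $\alpha_j$ an \LG{} of length $\geq2$, and $\beta$ an \LG{} or \FG) that satisfies condition~(i) of Proposition~\ref{pro} together with the requirement that $\beta_1(\sigma)$ be the greatest descent top of $\beta$ must coincide with the output of the deterministic algorithm defining the rix-factorization. Since that algorithm is uniquely determined by $\sigma$ (and its output satisfies the two conditions, by Proposition~\ref{pro}(i) and the stopping rules), uniqueness follows at once. I would argue by induction on the number $i$ of hook-blocks, peeling off $\alpha_1$ and checking that this peeling agrees with steps~(ii)--(iii) of the algorithm.

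The heart of the matter is the claim that, when $i\geq1$, the last letter $x_1$ of $\alpha_1$ is exactly the greatest descent top of the whole word $\sigma$. First I would verify that $x_1$ \emph{is} a descent top: the letter immediately after $x_1$ is either the first letter of $\alpha_2$, which is smaller than $x_2<x_1$ since $\alpha_2$ is an \LG{} of length $\geq2$ (so its maximal letter is its last letter $x_2$), or, when $i=1$, it is $\beta_1(\sigma)<x_1$ by condition~(i). Next I would show that no descent top of $\sigma$ exceeds $x_1$, by sorting descent tops into three types and bounding each: a descent top internal to some $\alpha_k$ lies strictly below the maximal letter $x_k\leq x_1$ of that \LG; a descent top at the junction of $\alpha_k$ and the following block is the letter $x_k\leq x_1$; and a descent top internal to $\beta$ is at most $\beta_1(\sigma)$ by the second hypothesis, whence at most $\beta_1(\sigma)<x_i\leq x_1$ by condition~(i). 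Together these give the claim.

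Granting the claim, step~(ii) applied to $\sigma$ selects precisely $x_1$, and since $x_1$ is the last letter of $\alpha_1$, the induced splitting $\sigma=w'x_1w''$ has $w'=\alpha_1$ with $x_1$ removed, so $w'\neq\emptyset$ because $|\alpha_1|\geq2$, and $w''=\alpha_2\cdots\alpha_i\beta$. Hence step~(iii) returns $\alpha_1=w'x_1$ and recurses on $w''$. The suffix $w''$ carries a factorization of the same shape that still satisfies condition~(i) and leaves $\beta$ (hence $\beta_1(\sigma)$ and its greatest descent top) unchanged, so the inductive hypothesis completes the peeling. For the base case $i=0$ I treat $\sigma=\beta$ directly: if $\beta$ is increasing the algorithm halts at step~(ii) with $\beta=\sigma$; otherwise the second hypothesis makes $\beta_1(\sigma)$ the greatest descent top of $\beta$, which is its first letter, so $w'=\emptyset$ and the algorithm halts at step~(iii) with $\beta=\sigma$. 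In each situation the output matches the given factorization.

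I expect the main obstacle to be the $\beta$-block in the descent-top analysis: unlike the $\alpha_k$, the block $\beta$ may contain letters larger than $x_1$ (for instance the global maximum $n$ when $\beta$ is an \LG), so boundedness of the descent tops inside $\beta$ cannot come from the hook structure alone and must be extracted entirely from the hypothesis that $\beta_1(\sigma)$ is the greatest descent top of $\beta$. Making this airtight, in particular correctly separating the junction descent top $x_i$ from the descent tops internal to $\beta$ and ensuring that a large non-maximal letter of $\beta$ never serves as a descent top exceeding $x_1$, is the only genuinely delicate point; the remaining verifications amount to routine bookkeeping with the hook conventions.
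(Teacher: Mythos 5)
Your proposal is correct and follows essentially the same route as the paper's (very terse) proof: the paper simply asserts that each hook must be factored out step by step from left to right by the algorithm, and your argument supplies the missing details, the key point being that the last letter $x_1$ of $\alpha_1$ is forced to be the greatest descent top of $\sigma$, with the descent tops internal to $\beta$ controlled exactly by the hypothesis on $\beta_1(\sigma)$. The case analysis and the induction on the number of blocks are sound, so nothing further is needed.
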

 \begin{proof}
 One can show that each {L-hook} or {F-hook} must be factored out  step by step, from left to right, by applying the algorithm of rix-factorization. 
 \end{proof}

For each permutation $\sigma\in\S_n$ we define its {\em standard cycle form} ($\SCF$)
 as the product of orderd cycles satisfying the following conditions:
\begin{itemize}
\item each cycle $\big(a\,\sigma(a)\,\dots\,\sigma^l(a)\big)$,  with $\sigma^{l+1}(a)=a$, is written with its largest element first $a\geq \sigma^k(a)$ for all $k\geq 1$;
\item the cycles of length $\geq2$ are arranged  in decreasing order of their largest element;
\item the cycles of length $1$ are arranged in increasing order of their elements and 
after  the cycles of length $\geq2$.
\end{itemize}
For example, if
$\sigma=129753468$, then $\SCF(\sigma)=(9\,8\,6\,3)(7\,4)(1)(2)(5)$.

 Let  $\Fix(\sigma)$ be the set of all  fixed points of $\sigma\in\S_n$ and 
 introduce the set  
 \begin{align}\label{newmodel}
 \R^{0}_{n,k}:=\{\sigma\in \S_n : \rix(\sigma)=0,\dd(\sigma)=1,\des(\sigma)=k\}.
 \end{align}
 \begin{proposition}\label{Phi}
  There is  a bijection  
$\Phi: \S_n\rightarrow\S_n$ satisfying
$$
\des(\sigma)=\exc(\Phi(\sigma))\quad\text{and} \quad\Rix(\sigma)=\Fix\big(\Phi(\sigma)\big)
$$
 for each $\sigma\in\S_n$. 
 %Moreover,
% The bijection $\Phi:\S_n\rightarrow\S_n$ transforms  the pair $(\des,\Rix)$ to $(\exc,\Fix)$. 
Moreover, the restriction of $\Phi$ on $\R_{n,k}^0$ is  a bijection from  $\R_{n,k}^0$ to  $\DE_{n,k}$.
 \end{proposition}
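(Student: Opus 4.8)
The plan is to construct $\Phi$ directly from the rix-factorization \eqref{quasi} and to establish both identities by a parallel boundary count, first on $(\des,\exc)$ and then on $(\dd,\cda)$. Given $\sigma\in\S_n$ with rix-factorization $\sigma=\alpha_1\cdots\alpha_i\beta$, write $\beta=\beta'r$, where $r$ is the increasing block of rixed points $\Rix(\sigma)$ and $\beta'$ is the complementary prefix; by Proposition~\ref{pro}(ii)--(iii), $\beta'$ is an \FG{} when $\beta$ is an \LG{} and $\beta'=\beta$ when $\beta$ is an \FG. To each hook $w=w_1\cdots w_m$ among $\alpha_1,\dots,\alpha_i,\beta'$ I attach the cycle $(w_m\,w_{m-1}\,\cdots\,w_1)$, and I declare every letter of $r$ to be a fixed point; let $\Phi(\sigma)$ be the resulting permutation. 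By Proposition~\ref{pro}(i) the cycle maxima satisfy $x_1>\cdots>x_i>\beta_1(\sigma)$, so these cycles listed in this order followed by the rixed points are exactly the $\SCF$ of $\Phi(\sigma)$. The inverse map reverses each cycle of the $\SCF$ back into a hook and reads the fixed points as the rixed block; Proposition~\ref{pro}(i) and the uniqueness of the rix-factorization force the correct linearization and reassembly, so $\Phi$ is a bijection.

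For $\des(\sigma)=\exc(\Phi(\sigma))$ I would match contributions piece by piece. A one-line computation shows that the cycle $(w_m\,\cdots\,w_1)$ attached to an \LG{} $w$ has $1+\des(w)$ excedances, the surplus $1$ coming from $w_1\mapsto w_m$, while the cycle attached to the \FG{} $\beta'$ has exactly $\des(\beta')$ excedances and the rixed points have none. On the word side the descents of $\sigma$ are the internal descents of the pieces---namely $\sum_j\des(\alpha_j)+\des(\beta')$, since $r$ is increasing and the junction $\beta'|r$ is an ascent (a descent there would produce a descent top exceeding $\beta_1(\sigma)$, contradicting that $\beta_1(\sigma)$ is the greatest descent top of $\beta$)---together with the $i$ junctions $\alpha_1|\alpha_2,\dots,\alpha_i|\beta$, each of which is a descent because $x_j$ exceeds the first letter of the following piece by Proposition~\ref{pro}(i). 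These $i$ junction descents are precisely the $i$ surplus excedances, so the totals agree. The identity $\Rix(\sigma)=\Fix(\Phi(\sigma))$ then drops out of the construction: the only $1$-cycles are the letters of $r$, because each $\alpha_j$ has length $\ge2$ and $\beta'$ cannot have length $1$ (a lone non-rixed letter would have to be $\beta_1(\sigma)$ itself, forcing a rixed letter smaller than $\beta_1(\sigma)$, which is absurd).

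For the final assertion I would restrict to $\rix(\sigma)=0$, which by Proposition~\ref{pro}(ii)--(iii) means $\beta$ is an \FG{} with $r=\emptyset$, equivalently $\fix(\Phi(\sigma))=0$; as $\des=\exc$ already delivers the common index $k$, it remains to show $\dd(\sigma)=\cda(\Phi(\sigma))+1$. Running the same count on second-order statistics, inside the cycle $(w_m\,\cdots\,w_1)$ of any piece an interior element $w_k$ is a double excedance exactly when $w_{k-1}>w_k>w_{k+1}$, i.e.\ when $w_k$ is an internal double descent of $w$; in addition the first letter $w_1$ is a double excedance iff $w_1>w_2$ when $w$ is an \LG, whereas the maximal first letter of the \FG{} $\beta$ is never a double excedance. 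Thus $\cda(\Phi(\sigma))$ equals the number of internal double descents of all pieces plus $\sum_j[\,a^{(j)}_1>a^{(j)}_2\,]$, where $a^{(j)}_1,a^{(j)}_2$ are the first two letters of $\alpha_j$. The word $\sigma$ exhibits these same internal double descents and the same left-end contributions $[\,a^{(j)}_1>a^{(j)}_2\,]$ at the $\alpha_j$ (their right ends being peaks), plus one further double descent at the first letter of $\beta$---always present, since that letter is the maximum of $\beta$ while its left neighbour ($x_i$, or $+\infty$ if $i=0$) exceeds it and $\beta_1(\sigma)>\beta_2$. This single unmatched double descent gives $\dd(\sigma)=\cda(\Phi(\sigma))+1$, hence $\dd(\sigma)=1\iff\cda(\Phi(\sigma))=0$, and together with the two global identities this proves that $\Phi$ maps $\R^{0}_{n,k}$ bijectively onto $\DE_{n,k}$.

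The delicate point throughout is the boundary bookkeeping in these two parallel counts. The crux is the opposite behaviour of the maximal first letter of the \FG{} $\beta$: it is never a double excedance in its cycle yet is always a double descent in the word, and it is exactly this asymmetry that manufactures the lone extra double descent responsible for the $+1$ linking $\dd$ to $\cda$.
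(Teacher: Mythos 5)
Your construction of $\Phi$ is identical to the paper's: read off the rix-factorization, strip the rixed points into fixed points, and turn each remaining hook into the cycle obtained by reversing it (your single formula $(w_m\,\cdots\,w_1)$ and the paper's two formulas for L-hooks and F-hooks give the same cyclic permutation, the latter merely rotated to begin at its maximum so as to be in $\SCF$ form). The paper dismisses the verification of $\des=\exc$, $\Rix=\Fix$ and the restriction to $\R^0_{n,k}$ as ``straightforward to check''; your boundary bookkeeping --- the $i$ junction descents matching the $i$ surplus excedances of the L-hook cycles, and the unmatched double descent at $\beta_1(\sigma)$ yielding $\dd(\sigma)=\cda(\Phi(\sigma))+1$ when $\rix(\sigma)=0$ --- is a correct execution of exactly that check.
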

 \begin{proof}
Starting from $\sigma\in \S_n$ with  rix-factorization $\alpha_1\alpha_2\cdots\alpha_{i}\beta$.  Deleting  all the rixed points of $\sigma$ we  get 
 $$
 \sigma'=\alpha_1\alpha_2\cdots\alpha_{i}\beta',
 $$
 where $\beta'$ is the word (may be empty) obtained from $\beta$ by removing all the rixed points. Clearly, $\beta'$ is a \FG.
 To  any {\LG} or {\FG} $\alpha=a_1a_2\cdots a_l$, we associate the cycle 
 $$
 \tilde{\alpha}=
 \begin{cases}
 (a_l,a_{l-1},\ldots,a_1),\qquad&\text{if $\alpha$ is a \LG;}\\
 (a_1,a_{l},a_{l-1},\ldots,a_2),\qquad&\text{if $\alpha$ is a \FG.}
 \end{cases}
 $$
Now, we define the SCF of $\Phi(\sigma)$ by
 $$
 \Phi(\sigma):=\tilde{\alpha_1}\tilde{\alpha_2}\cdots\tilde{\alpha_{i}}\tilde{\beta'}(\sigma_k)(\sigma_{k+1})\cdots(\sigma_n),
 $$
 where $\Rix(\sigma)=\{\sigma_k,\sigma_{k+1},\ldots,\sigma_n\}$.
 By Proposition~\ref{pro}, the mapping $\Phi$ is well-defined.
For example, if 
 $$
  \sigma=7\,6\,9\,1\,8\,4\,2\,3\,5\,10=7\,6\,9|1\,8|4\,2\,3\,5\,10,
 $$
 then $\sigma'=7\,6\,9|1\,8|4\,2\,3$ and thus 
 $$\Phi(\sigma)=(9,6,7)(8,1)(4,3,2)(5)(10)=8\,4\,2\,3\,5\,7\,9\,1\,6\,10.$$
 Clearly, $\des(\sigma)=4=\exc(\Phi(\sigma))$ and $\Rix(\sigma)=\{5,10\}=\Fix(\Phi(\sigma))$. 
 
 To show that $\Phi$ is a bijection, we define explicitly its inverse.
 Given a permutation $\sigma\in\S_n$, we write $\sigma$ in standard cycle form as 
$$
 \sigma=C_1C_2\cdots C_i O_1O_2\cdots O_j,
$$
 where the $C$'s are cycles of length $\geq2$ and $O_k=(o_k)$ is one point cycle for $k=1,2,\ldots,j$. For each cycle $C=(c_1,c_2,\ldots,c_l)$ of length $l\geq2$, we define the two words ({\LG} or {\FG})
 $$
 \tilde{C}:=c_lc_{l-1}\cdots c_1\quad\text{and}\quad C':=c_1c_lc_{l-1}\cdots c_2.
 $$
 Then $\Phi^{-1}(\sigma)$ (viewed as a word) is defined as
  $$
 \Phi^{-1}(\sigma)=
 \begin{cases}
 \tilde{C_1}\tilde{C_2}\cdots C'_io_1\cdots o_j,\qquad&\text{if $o_1>$ the largest element of $C_i$;}\\
 \tilde{C_1}\tilde{C_2}\cdots \tilde{C_i}o_1\cdots o_j,\qquad&\text{otherwise.}
 \end{cases}
 $$
 For example, $\Phi^{-1}((9,6,7)(8,1)(4,3,2)(5)(10))=7\,6\,9\,1\,8\,4\,2\,3\,5\,10$.
 
  It is straightforward to check the desired properties of $\Phi$.
  \end{proof}

 \begin{remark}
 In view of~\eqref{des:ai}, the two triples $(\exc,\fix,\maj)$ and $(\des,\rix,\aid)$ are equidistributed on $\S_n$, where $\aid(\sigma)=\ai(\sigma)+\des(\sigma)$. 
 Note that $(\Fix,\maj)$ 
 and $(\Rix,\aid)$ are 
 not  equidistributed on $\S_3$.
 \end{remark}

It is easy to construct a bijection between  $\R^0_{n,k}$ and $\widetilde{\D}_{n,k}$.
Define the mapping 
\begin{equation}\label{bij:f}
f: \R^0_{n,k}\rightarrow\widetilde{\D}_{n,k}\quad\text{by}\quad  
 f(\sigma)=\varphi'_x(\sigma),
 \end{equation}
 where $\sigma$ is any permutation in $\R^0_{n,k}$  and $x=\beta_1(\sigma)$.
 
 \begin{corollary}
The mapping $f: \R^0_{n,k} \to \widetilde{\D}_{n,k}$ is a bijection.
% between $$ and $\widetilde{\D}_{n,k}$.
 \end{corollary}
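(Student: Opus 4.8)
The plan is to produce a two-sided inverse for $f$ out of the same modified Foata--Strehl involutions, exploiting that each $\varphi'_x$ is an involution. Everything rests on one structural observation about $\R^0_{n,k}$: if $\sigma$ has rix-factorization $\alpha_1\cdots\alpha_i\beta$ with $\rix(\sigma)=0$, then by Proposition~\ref{pro}(ii) the block $\beta$ is an \FG{}, so $x:=\beta_1(\sigma)$ is the largest letter of $\beta$ (which has length $\geq2$). Together with Proposition~\ref{pro}(i) --- the last letter of $\alpha_i$, if present, exceeds $x$, and otherwise the left boundary value $+\infty$ does --- this forces $x$ to be a double descent of $\sigma$; since $\dd(\sigma)=1$ it is the \emph{unique} one. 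I would also record at the outset the bookkeeping identity $\des(\pi)=\peak(\pi)+\dd(\pi)$ (with the convention $\pi_0=\pi_{n+1}=+\infty$) and the standard fact, already used in Section~\ref{sec:main}, that the MFS-action fixes the set of peaks and only toggles the double-ascent/double-descent type of the letter being moved.

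The forward direction is then immediate. As all letters of $\beta$ to the right of $x$ are smaller than $x$ and nothing larger follows, $\varphi'_x$ merely relocates $x$ to the final position, so $f(\sigma)=\alpha_1\cdots\alpha_i\,\beta_2\cdots\beta_m\,x$ with penultimate letter $\beta_m<x$; this supplies the terminal ascent. Flipping the unique double descent gives $\dd(f(\sigma))=0$, and by the two recorded facts $\des(f(\sigma))=\peak(\sigma)=\des(\sigma)-1=k-1$, so $f(\sigma)\in\widetilde{\D}_{n,k}$ and, decisively, the last letter of $f(\sigma)$ equals $\beta_1(\sigma)$. Injectivity drops out of this last identity: if $f(\sigma)=f(\sigma')=\tau$ then $\beta_1(\sigma)=\tau_n=\beta_1(\sigma')$, whence $\sigma=\varphi'_{\tau_n}(\tau)=\sigma'$ by the involution property.

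For surjectivity I would set $g(\tau):=\varphi'_{\tau_n}(\tau)$ and aim to show that $g$ maps $\widetilde{\D}_{n,k}$ into $\R^0_{n,k}$ with $\beta_1(g(\tau))=\tau_n$; granting this, the identities $g\circ f=\id$ and $f\circ g=\id$ follow formally from the involutivity of the $\varphi'_x$ together with the two ``$\beta_1=$ last letter'' correspondences, proving $f$ bijective. Writing $y=\tau_n$, the terminal ascent makes $y$ a double ascent, so valley-hopping carries it inward to $g(\tau)=\tau_1\cdots\tau_p\,y\,\tau_{p+1}\cdots\tau_{n-1}$ with $\tau_p>y>\tau_{p+1}$ and $\tau_{p+1},\ldots,\tau_{n-1}$ all below $y$ (and $p\leq n-2$ because $\tau_{n-1}<y$); peak-preservation again gives $\dd(g(\tau))=1$ and $\des(g(\tau))=k$. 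What remains --- and this is the crux --- is $\rix(g(\tau))=0$ with $\beta_1(g(\tau))=y$.

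This is the step I expect to be the main obstacle, and I would settle it by tracing the rix-factorization algorithm on $g(\tau)$. The algorithm chops successively at the greatest descent top and so visits the nested suffixes cut at the right-to-left maxima $M_1>M_2>\cdots$ of $g(\tau)$, halting at the first visited suffix that is increasing or begins with its own maximum. The suffix $S=y\,\tau_{p+1}\cdots\tau_{n-1}$ is of the latter kind and is an \FG{} (it has length at least two since $\tau_{n-1}<y$), so as soon as it is reached we obtain $\beta=S$, hence $\rix(g(\tau))=0$ and $\beta_1(g(\tau))=y$. It remains to exclude a premature halt. An increasing earlier suffix is impossible, since each still carries a value exceeding $y$ in its interior; and a premature halt of the second kind would place some value $>y$ at the front of a visited suffix --- being a right-to-left maximum its right neighbour is smaller, while its left neighbour (an earlier $M_j$, or the boundary $+\infty$) is larger --- making it a double descent other than $y$, against $\dd(g(\tau))=1$. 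This pins down $S$ as the final block and finishes the proof.
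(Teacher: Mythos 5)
Your proposal is correct and follows the paper's own route: the forward map is checked exactly as in the paper, and the inverse is the same map $\tau\mapsto\varphi'_{\tau_n}(\tau)$, verified to land in $\R^0_{n,k}$ via the rix-factorization algorithm. The only difference is one of detail: where the paper asserts $y=\beta_1(f^{-1}(\sigma))$ ``by the algorithm of the rix-factorization,'' you correctly supply the missing argument (tracking the successive greatest descent tops and ruling out a premature halt using $\dd(g(\tau))=1$).
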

 \begin{proof}
 For each $\sigma\in \R^0_{n,k}$, by Proposition~\ref{pro} the letter $\beta_1(\sigma)$ is the only double descent of $\sigma$, which becomes a double ascent of $f(\sigma)$. As $\rix(\sigma)=0$, the last letter of $f(\sigma)$ is $\beta_1(\sigma)$. Thus $f(\sigma)$ is a permutation in $\widetilde{\D}_{n,k}$ and $f$ is well-defined. To show  that $f$ is a bijection we define its inverse $f^{-1}$ as  the word
 $$
 f^{-1}(\sigma):=\varphi'_y(\sigma),
 $$
where $\sigma$ is any permutation in $\widetilde{\D}_{n,k}$ and $y$ is the last letter of $\sigma$.
 All we need to check is that now $f^{-1}(\sigma)$ is a permutation in $\R^0_{n,k}$. Clearly, the letter $y$ becomes the only double descent in $ f^{-1}(\sigma)$ and $\des(f^{-1}(\sigma))=k$. Moreover, we have $y=\beta_1(f^{-1}(\sigma))$ by the algorithm of the rix-factorization, and so $\rix(f^{-1}(\sigma))=0$. Therefore, the word $f^{-1}(\sigma)$ is a permutation in $\R^0_{n,k}$.
\end{proof}
\begin{remark}\label{bijection}
In view of Proposition~\ref{Phi}, the composition $\Phi\circ f^{-1}$ is a bijection between $\widetilde{\D}_{n,k}$ and $\DE_{n,k}$.  This bijection is illustrated in Table~\ref{phi:f}.
\end{remark}
 
\begin{table}[ht]
\[ \begin{tabular}{c |c c ccc}
%\hline
$\widetilde{\D}_{4,2}$ &\qquad $1324$ &\quad $1423$ &\quad $2314$ &\quad $2413$  &\quad $3412$
\\
\hline
\\
$\R^0_{4,2}$ &\qquad $4132$ &\quad$1432$ &\quad$4213$ &\quad $2431$  &\quad$3421$ 
\\
\hline
\\
$\DE_{4,2}$  &\qquad $4312$ &\quad $4321$&\quad $2413$&\quad $3412$&\quad $2143$\\
\hline
 \end{tabular} \] 
\caption{The bijection $\Phi\circ f^{-1}: \widetilde{\D}_{4,2}
\stackrel{f^{-1}}{\rightarrow}\R_{4,2}\stackrel{\Phi}{\rightarrow}  \DE_{4,2}$. \label{phi:f}}
\end{table}

%%%%%%%%%%%%%%%%%%%%%%%%%%%
\section{Proof of Theorem~\ref{main:result2}}\label{sec:main2}
%%%%%%%%%%%%%%%%%%%%%%%%%%%%%%
For any $x\in[n]$, we introduce a new action $\varphi''_x:\S_n\rightarrow\S_n$ as
$$
\varphi''_x(\sigma)=
\begin{cases}
\sigma,&\text{if $x\in \{\beta_1(\sigma)\}\cup \Rix(\sigma)$;}\\
\varphi'_x(\sigma),& \text{otherwise.}
\end{cases}
$$
Since $\varphi'_x$'s are involutions and that they commute, so do $\varphi''_x$'s. Thus, for any subset $S\subseteq[n]$ we can define the mapping  $\varphi''_S :\S_n\rightarrow\S_n$ by 
$$
\varphi''_S(\sigma)=\prod_{x\in S}\varphi''_x(\sigma).
$$
This is a new $\Z_2^n$-action on $\S_n$ via the functions $\varphi''_S$, $S\subseteq[n]$, that we called {\em restricted MFS-action}. An important property of the action $\varphi''_x$ is that it preserves the 
rix-factorization type of permutations, namely, 
\begin{lemma}
Let $\sigma\in\S_n$ be  a permutation 
with rix-factorization $\alpha_1\alpha_2\cdots\alpha_i\alpha_{i+1}$ with $\alpha_{i+1}=\beta$. 
For any $x\in[n]$, 
if $x$ is a letter of $\alpha_k$ ($1\leq k\leq i+1$) , then
the rix-factorization of $\varphi''_x(\sigma)$ is 
$\alpha_1\ldots\alpha'_k\ldots \alpha_i\alpha_{i+1}$, 
where $\alpha'_k$ is a rearrangement of $\alpha_k$.
 Moreover,
\begin{equation}\label{var:rix}
\beta_1(\varphi''_x(\sigma))=\beta_1(\sigma)\quad\text{and}\quad \Rix(\varphi''_x(\sigma))= \Rix(\sigma).
\end{equation}
\end{lemma}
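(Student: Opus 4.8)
The plan is to show that applying the restricted action $\varphi''_x$ to $\sigma$ only rearranges the internal letters of the single rix-factor $\alpha_k$ that contains $x$, while leaving every other factor (and in particular the whole suffix $\beta$) untouched. Once I establish that the factorization type is preserved in this way, the equalities in~\eqref{var:rix} become almost immediate: $\beta$ is not touched when $x$ lies in some $\alpha_k$ with $k\leq i$, and the definition of $\varphi''_x$ forbids moving $x$ when $x=\beta_1(\sigma)$ or $x\in\Rix(\sigma)$, so the suffix $\beta$ is frozen entirely. The main work is therefore the structural claim about the factorization, and the hard part will be verifying that the Foata--Strehl transposition of the two blocks $w_2,w_3$ around $x$ cannot escape the factor $\alpha_k$.

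\textbf{Setup and reduction.} First I would dispose of the trivial case: if $x$ is a peak or a valley of $\sigma$, or if $x\in\{\beta_1(\sigma)\}\cup\Rix(\sigma)$, then $\varphi''_x(\sigma)=\sigma$ and there is nothing to prove. So assume $x$ is a double ascent or double descent not belonging to $\{\beta_1(\sigma)\}\cup\Rix(\sigma)$, so that $\varphi''_x(\sigma)=\varphi'_x(\sigma)=\varphi_x(\sigma)$ genuinely moves $x$. Write the $x$-factorization $\sigma=w_1\,w_2\,x\,w_3\,w_4$ as in the definition of $\varphi_x$, so that $\varphi_x(\sigma)=w_1\,w_3\,x\,w_2\,w_4$; recall that $w_2$ and $w_3$ consist precisely of the letters smaller than $x$ immediately adjacent to $x$ on each side.

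\textbf{The key localization step.} Let $\alpha_k$ be the rix-factor containing $x$ (where I allow $k=i+1$, i.e.\ $\alpha_{i+1}=\beta$). Using Proposition~\ref{pro}(i), the last letters $x_1>x_2>\cdots>x_i>\beta_1(\sigma)$ are the successive greatest descent tops produced by the algorithm, and these are exactly the boundary markers that separate the factors. I would argue that every letter of $w_2$ and $w_3$ — being smaller than $x$ and lying in the contiguous run around $x$ — must belong to the same factor $\alpha_k$ as $x$: the block boundaries are cut at the greatest descent top of each successive trailing word, and such a cut letter $x_j$ exceeds all the letters of $\alpha_{j+1}\cdots\beta$ up to that point, so no boundary can fall strictly between $x$ and a smaller adjacent letter. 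Consequently the swap $w_2\leftrightarrow w_3$ is confined to $\alpha_k$, producing a rearrangement $\alpha'_k$ of $\alpha_k$, and all other factors are copied verbatim. Here I must also check that the greatest descent top $x_k$ of the trailing word starting at $\alpha_k$ is unchanged by the rearrangement, so that the algorithm cuts $\alpha'_k$ at the same place and the factorization type (L-hook versus F-hook of each piece) is literally preserved; this follows because the set of letters of $\alpha_k$, hence its maximum and its descent-top structure at the boundary, is invariant under the local transposition.

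\textbf{Deducing~\eqref{var:rix}.} Finally, since $x\notin\{\beta_1(\sigma)\}\cup\Rix(\sigma)$, the factor $\beta=\alpha_{i+1}$ is never the modified factor unless $k=i+1$; but even when $k=i+1$ the definition of $\varphi''_x$ excludes $x=\beta_1(\sigma)$ and $x\in\Rix(\sigma)$, so $x$ lies in the initial segment of $\beta$ strictly before $\beta_1$ cannot move the first letter $\beta_1(\sigma)$ nor any rixed point. In all cases the suffix data is preserved: $\beta_1(\varphi''_x(\sigma))=\beta_1(\sigma)$ and $\Rix(\varphi''_x(\sigma))=\Rix(\sigma)$, as the maximal increasing suffix of $\beta$ together with the threshold $\beta_1(\sigma)$ is untouched. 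The main obstacle, as noted, is the localization step; I expect that a careful case analysis on whether $x$ is a double ascent or double descent, combined with Proposition~\ref{pro}(i), will pin down that $w_2,w_3\subseteq\alpha_k$ and close the argument.
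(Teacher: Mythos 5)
Your proposal is correct and follows essentially the same route as the paper's proof: dispose of the fixed cases ($x$ a peak or valley, or $x\in\{\beta_1(\sigma)\}\cup\Rix(\sigma)$), then use Proposition~\ref{pro}(i) to confine the Foata--Strehl blocks $w_2,w_3$ to the factor containing $x$, from which the preservation of the factorization and of \eqref{var:rix} follows. One caveat: your intermediate claim that the cut letter $x_j$ exceeds \emph{all} letters of $\alpha_{j+1}\cdots\beta$ is false in general (rixed points may exceed $x_j$, e.g.\ $8,9>7=x_2$ for $\pi=1\,10\,|\,4\,7\,|\,6\,2\,5\,3\,8\,9$); what is actually needed, and what the paper uses, is only that $x\le x_k<x_{k-1}$ when $x$ lies in $\alpha_k$ with $k\le i$, and that $x<\beta_1(\sigma)$ when $x$ is a non-rixed, non-initial letter of $\beta$ --- the latter because a letter of $\beta$ exceeding $\beta_1(\sigma)$ that is not in the maximal increasing suffix would produce a descent top of $\beta$ larger than $\beta_1(\sigma)$, while one in that suffix is by definition a rixed point.
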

%%%%%%%%%%%
\begin{proof}
For $x\in [n]$ we apply $\varphi_x''$ to $\sigma$ and  distinguish two cases.
\begin{itemize}
\item[a)] $x$ is a letter of $\alpha_k$ ($1\leq k\leq i$). Let $x_k$ be the last letter of $\alpha_k$, which is also the greatest letter in $\alpha_k$.
 If $x=x_k$,  then $x$ is a peak of $\sigma$ by Proposition~\ref{pro}, so $\varphi_x''(\sigma)=\sigma$. 
 If $x\neq x_k$, then $x<x_k<x_{k-1}$  by Proposition~\ref{pro},
where  $x_0=+\infty$.
Therefore, 
$\varphi_x''(\sigma)=\alpha_1\ldots \alpha_{k-1}\varphi_x'(\alpha_k)\alpha_{k+1}\ldots \alpha_i\beta$ with  $x_k$ as the last letter of $\varphi_x'(\alpha_k)$.
\item[b)] $x$ is a letter of $\alpha_{i+1}=\beta$. If $x\in \{\beta_1(\sigma)\}\cup \Rix(\sigma)$, then $\varphi''_x(\sigma)=\sigma$; otherwise,
$x<\beta_1(\sigma)$. Therefore, 
$\varphi_x''(\sigma)=\alpha_1\ldots \alpha_i\varphi_x'(\beta)$ with  $\beta_1(\sigma)$ as the first  letter of $\varphi_x'(\beta)$.
\end{itemize}
In both cases, the result  follows from the rix-factorization.
\end{proof}

Let  $\mathcal{R}_n$ be  the set of permutations in $\S_n$ without rixed points.

\begin{lemma}\label{exp:rix}
Let $``\st"$ be a statistic invariant  under the restricted MFS-action. 
Then,
$$
\sum_{\sigma\in\mathcal{R}_n}q^{\st(\sigma)}t^{\des(\sigma)}=\sum_{k=1}^{\lfloor n/2\rfloor}\biggl(\:\sum_{\sigma\in \R^0_{n,k}}q^{\st(\sigma)}\biggr)t^k(1+t)^{n-2k}.
$$
\end{lemma}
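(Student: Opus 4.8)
The plan is to run, for the restricted MFS-action, the same orbit argument that proved Theorem~\ref{main:result} for the ordinary MFS-action, now with $\R^0_{n,k}$ playing the role of $\D_{n,k}$. First I would check that the restricted action really does act on $\mathcal{R}_n$: the preceding lemma shows (see \eqref{var:rix}) that every $\varphi''_x$ fixes both $\beta_1(\sigma)$ and $\Rix(\sigma)$, so the defining condition $\Rix(\sigma)=\emptyset$ of $\mathcal{R}_n$ (equivalently $\rix(\sigma)=0$, by Proposition~\ref{pro}(iii)) is preserved, and the group $\Z_2^n$ acts on $\mathcal{R}_n$ through the maps $\varphi''_S$. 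Since $\st$ is invariant under this action by hypothesis, the idea is to evaluate $\sum q^{\st}t^{\des}$ orbit by orbit.

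The crux is to pin down, in each orbit, a canonical representative lying in some $\R^0_{n,k}$. For $\sigma\in\mathcal{R}_n$ the word $\beta$ of its rix-factorization is an \FG{} of length $\geq2$ by Proposition~\ref{pro}(ii), so $\beta_1(\sigma)$ is the greatest letter of $\beta$ and is immediately preceded either by the last letter $x_i$ of $\alpha_i$, which exceeds it by Proposition~\ref{pro}(i), or by $\sigma_0=+\infty$; in both cases $\beta_1(\sigma)$ is a double descent of $\sigma$. Now $\varphi''_{\beta_1(\sigma)}$ fixes $\sigma$, so $\beta_1(\sigma)$ is never flipped, whereas every double descent $x\neq\beta_1(\sigma)$ is flippable (here $\Rix(\sigma)=\emptyset$) and $\varphi''_x$ turns it into a double ascent. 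Exactly as for the MFS-action, the maps $\varphi''_x$ with $x$ flippable toggle the double-ascent/double-descent type of $x$ while preserving the type of every other letter; hence each orbit contains a unique permutation $\bar\sigma$ whose sole double descent is $\beta_1(\bar\sigma)$, that is $\dd(\bar\sigma)=1$. Setting $k:=\des(\bar\sigma)$ gives $\bar\sigma\in\R^0_{n,k}$.

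It then remains to compute the orbit sum. The flippable letters of $\bar\sigma$ are precisely its $\da(\bar\sigma)$ double ascents, and flipping any subset of them independently raises $\des$ by the size of the subset while leaving $\st$ unchanged, so
$$
\sum_{\pi\in\Orb(\bar\sigma)}q^{\st(\pi)}t^{\des(\pi)}=q^{\st(\bar\sigma)}t^{\des(\bar\sigma)}(1+t)^{\da(\bar\sigma)}.
$$
To rewrite the exponent I would use the standard identities $\valley(\bar\sigma)=\peak(\bar\sigma)+1$, $\des(\bar\sigma)=\peak(\bar\sigma)+\dd(\bar\sigma)$ and $\peak(\bar\sigma)+\valley(\bar\sigma)+\da(\bar\sigma)+\dd(\bar\sigma)=n$, together with $\dd(\bar\sigma)=1$; these yield $\peak(\bar\sigma)=k-1$, $\valley(\bar\sigma)=k$, hence $\da(\bar\sigma)=n-2k$. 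Summing over all orbits, i.e. over all $\bar\sigma\in\bigcup_k\R^0_{n,k}$, produces
$$
\sum_{\sigma\in\mathcal{R}_n}q^{\st(\sigma)}t^{\des(\sigma)}=\sum_{k}\biggl(\sum_{\sigma\in\R^0_{n,k}}q^{\st(\sigma)}\biggr)t^k(1+t)^{n-2k},
$$
where the range $1\le k\le\lfloor n/2\rfloor$ follows from $\da(\bar\sigma)=n-2k\ge0$ and $\des(\bar\sigma)\ge\dd(\bar\sigma)=1$.

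I expect the main obstacle to be the second step: establishing that each orbit has a unique representative with a single double descent and that it lands in $\R^0_{n,k}$. This rests on $\beta_1$ being a \emph{non-flippable} double descent while all other double descents are flippable, which in turn relies on the preceding lemma (that $\varphi''_x$ preserves the rix-factorization type and fixes $\beta_1$ and $\Rix$) and on the type-toggling property of the Foata--Strehl maps. Once the canonical representatives are identified, the counting in the third step is routine.
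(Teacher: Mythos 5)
Your proposal is correct and follows essentially the same route as the paper: stability of $\mathcal{R}_n$ under the restricted action via \eqref{var:rix}, a unique orbit representative whose only double descent is $\beta_1$, hence lying in $\R^0_{n,k}$, and the orbit sum $q^{\st}t^{\des}(1+t)^{\da}$ simplified by the identities $\valley=\peak+1$ and $\dd+\da+2\peak=n-1$. The only difference is cosmetic: you spell out why $\beta_1(\sigma)$ is a double descent directly from the rix-factorization, where the paper just cites Proposition~\ref{pro}.
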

\begin{proof}
First we note that the restricted MFS-action is stable on $\mathcal{R}_n$, that is,  $\sigma\in\mathcal{R}_n$ implies $\varphi''_x(\sigma)\in\mathcal{R}_n$ for any $x\in[n]$. This follows from the property of $\varphi''_x$ in~\eqref{var:rix}.

For a permutation $\sigma\in\mathcal{R}_n$ let $\widetilde{\Orb}(\sigma)=\{g(\sigma): g\in\Z_2^n\}$ be the orbit of $\sigma$ in $\mathcal{R}_n$ under the restricted MFS-action. By Proposition~\ref{pro}, the letter $\beta_1(\sigma)$ is a double descent of $\sigma$. If $x\in[n]$ is a double descent of $\sigma$ different from $\beta_1(\sigma)$, then $x$ becomes a double ascent of $\varphi''_x(\sigma)$ and $\des(\sigma)=\des(\varphi''_x(\sigma))+1$. Therefore, there is a unique element $\tilde{\sigma}\in\widetilde{\Orb}(\sigma)$ 
 with  one double descent $\beta_1(\tilde{\sigma})$ such that
$$
\sum_{\pi\in\widetilde{\Orb}(\sigma)}q^{\st(\pi)}t^{\des(\pi)}=q^{\st(\tilde{\sigma})}t^{\des(\tilde{\sigma})}(1+t)^{\da(\tilde{\sigma})}.
$$
Since, for any  $\sigma\in \S_n$,  we have  $\valley(\sigma)=\peak(\sigma)+1$ and
$\dd(\sigma)+\da(\sigma)+2\peak(\sigma)=n-1$,
the result follows then from the fact that
$\dd(\tilde{\sigma})=1$ and $\peak(\tilde{\sigma})=\des(\tilde\sigma)-1$.
\end{proof}

It follows from Eq.~\eqref{des:ai} and Lemmas~\ref{lem:act} and~\ref{exp:rix} that 
$$
A_n(t,0,q)=\sum_{\sigma\in\mathcal{R}_n}t^{\des(\sigma)}q^{\ai(\sigma)}=\sum_{k=1}^{\lfloor n/2\rfloor}\biggl(\:\sum_{\sigma\in \R^0_{n,k}}q^{\ai(\sigma)}\biggr)t^k(1+t)^{n-2k}.
$$
 Applying the bijection $f: \R^0_{n,k}\rightarrow\widetilde{\D}_{n,k}$ defined in~\eqref{bij:f}, we get
 $$
 A_n(t,0,q)=\sum_{k=1}^{\lfloor n/2\rfloor}\biggl(\:\sum_{\sigma\in \widetilde{\D}_{n,k}}q^{\ai(\sigma)}\biggr)t^k(1+t)^{n-2k},
 $$
 which is equivalent to expansion~\eqref{der2} in view of Lemma~\ref{lem:inv}.
 
 %%%%%%%
 \section{Generating functions of $\gamma$-coefficients}\label{sec:gamma-coeff}

In this section, we derive the recurrences and generating functions of 
the two $\gamma$-coefficients in~\eqref{gam:euler} and~\eqref{gam:der} from 
their combinatorial  interpretations~\eqref{per} and \eqref{der2}, namely,
\begin{align}\label{def:gamma}
\gamma_{n,k}(q)=\sum_{\sigma\in \D_{n,k}}q^{\inv(\sigma)} \quad\text{and}\quad\widetilde{\gamma}_{n,k}(q)=\sum_{\sigma\in \widetilde{\D}_{n,k}}q^{\inv(\sigma)}.
\end{align}
Let  $\D_n=\{\sigma\in\S_n: \dd(\sigma)=0\}$ and 
 $\widetilde{\D}_n=\{\sigma\in\D_n: \sigma_{n-1}<\sigma_n\}$. 
Then
\begin{align}\label{int:gamma}
\Gamma_n(y,q):&=\sum_{k=0}^{\lfloor(n-1)/2\rfloor}\gamma_{n,k}(q)y^k=\sum_{\sigma\in \D_n}y^{\des(\sigma)}q^{\inv(\sigma)};\\
\widetilde{\Gamma}_n(y,q):&=\sum_{k=1}^{\lfloor n/2\rfloor}\widetilde{\gamma}_{n,k}(q)y^k=\sum_{\sigma\in \widetilde{\D}_n}y^{\des(\sigma)+1}q^{\inv(\sigma)}.
\end{align}
The  first values of $\Gamma_n(y,q)$ and $\widetilde{\Gamma}_n(y,q)$  for $1\leq n\leq5$ are:
\begin{align*}
\Gamma_1(y,q)&=\Gamma_2(y,q)=1,\quad \Gamma_3(y,q)=1+y(q+q^2),\\
\Gamma_4(y,q)&=1+y(q+q^2)(2+q+q^2),\\
\Gamma_5(y,q)&=1+y(3q+5q^2+5q^3+5q^4+2q^5+2q^6)\\
&+y^2(q+q^3)(1+q+q^2+q^3)(q+q^2);
\end{align*}
and
%The  first values of $\widetilde{\Gamma}_n(y,q)$ for $1\leq n\leq5$ are:
\begin{align*}
\widetilde{\Gamma}_1(y,q)&=0,\quad \widetilde{\Gamma}_2(y,q)=\widetilde{\Gamma}_3(y,q)=y,\\
\widetilde{\Gamma}_4(y,q)&=y+y^2(q+2q^2+q^3+q^4),\\
\widetilde{\Gamma}_5(y,q)&=y+y^2(2q+4q^2+4q^3+4q^4+2q^5+2q^6).
\end{align*}

For $n\geq 1$ let
$(q;q)_n :=\prod_{i=1}^{n}(1-q^i)$  be the shifted $q$-factorial and $(q;q)_0=1$. The 
{\em $q$-exponential function} and {\em $q$-binomial coefficients} are then defined by
$$
e(z;q):=\sum_{n\geq 0}\frac{z^n}{(q;q)_n}\quad \text{and}\quad
 {n\brack  k}_{q}:=\frac{(q;q)_n}{(q;q)_k(q;q)_{n-k}}.
$$

%%%%%%%%%%%%%%%
The following 
  generating function formula is due to Shareshian and Wachs~\cite{sw2,sw}:
\begin{equation}\label{fixversion}
1+\sum_{n\geq1}A_n(t,r,q)\frac{z^n}{(q;q)_n}=\frac{(1-t)e(rz;q)}{e(tz;q)-te(z;q)}.
\end{equation}
%where 
%$A_0(t,r,q)=1$, 
%%%%

%%%%%%%%
\begin{proposition}\label{gen:gamma}
The exponential generating functions  for $\Gamma_n(y,q)$ and 
$\widetilde{\Gamma}_n(y,q)$ are
\begin{align}
\sum_{n\geq0}\Gamma_n(y,q)\frac{z^n}{(q;q)_n}&=
\frac{e(z/(1+t);q)-te(tz/(1+t);q)}{e(tz/(1+t);q)-te(z/(1+t);q)},\label{gf1}\\
\sum_{n\geq0}\widetilde{\Gamma}_n(y,q)\frac{z^n}{(q;q)_n}&=
\frac{1-t}{e(tz/(1+t);q)-te(z/(1+t);q)},\label{gf2}
\end{align}
where $\Gamma_0(y,q)=\widetilde{\Gamma}_0(y,q)=1$  
and $y=t/(1+t)^2$.%$t=\frac{1-2y-\sqrt{1-4y}}{2y}$.
\end{proposition}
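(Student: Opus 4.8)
The plan is to derive both generating functions directly from the Shareshian--Wachs formula~\eqref{fixversion} via the substitution $z\mapsto z/(1+t)$, using the combinatorial $\gamma$-expansions already established in Theorems~\ref{main:result} and~\ref{main:result2}. First I would record the factored form of those two theorems. Setting $y=t/(1+t)^2$ and pulling $(1+t)^{n-1}$ (resp.\ $(1+t)^{n}$) out of the expansions~\eqref{per} and~\eqref{der2}, the definitions in~\eqref{int:gamma} give, for $n\geq1$,
\[
A_n(t,1,q)=(1+t)^{n-1}\,\Gamma_n(y,q),\qquad A_n(t,0,q)=(1+t)^{n}\,\widetilde{\Gamma}_n(y,q).
\]

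The derangement case~\eqref{gf2} is then immediate. Specializing~\eqref{fixversion} at $r=0$ and replacing $z$ by $z/(1+t)$ turns each summand $A_n(t,0,q)\,\frac{(z/(1+t))^n}{(q;q)_n}$ into $\widetilde{\Gamma}_n(y,q)\,\frac{z^n}{(q;q)_n}$, so that
\[
\sum_{n\geq0}\widetilde{\Gamma}_n(y,q)\frac{z^n}{(q;q)_n}=\frac{1-t}{e(tz/(1+t);q)-t\,e(z/(1+t);q)},
\]
once one checks that the right-hand side has constant term $\widetilde{\Gamma}_0(y,q)=1$, which holds because both $q$-exponentials equal $1$ at $z=0$ and $\frac{1-t}{1-t}=1$.

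For~\eqref{gf1} the prefactor is $(1+t)^{n-1}$ rather than $(1+t)^{n}$, so after the same substitution one gets $\Gamma_n(y,q)\frac{z^n}{(q;q)_n}=(1+t)\,A_n(t,1,q)\frac{(z/(1+t))^n}{(q;q)_n}$ for $n\geq1$, i.e.\ an extra factor $(1+t)$ on the nonconstant part. Accordingly I would specialize~\eqref{fixversion} at $r=1$, replace $z$ by $z/(1+t)$, multiply the nonconstant part by $(1+t)$, and restore the constant term $\Gamma_0(y,q)=1$. Abbreviating $E_1=e(z/(1+t);q)$ and $E_t=e(tz/(1+t);q)$, this yields
\[
\sum_{n\geq0}\Gamma_n(y,q)\frac{z^n}{(q;q)_n}=1+(1+t)\left(\frac{(1-t)E_1}{E_t-tE_1}-1\right),
\]
and combining over the common denominator $E_t-tE_1$ (the $t^2E_1$ terms cancel) collapses this to $(E_1-tE_t)/(E_t-tE_1)$, which is exactly~\eqref{gf1}.

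The only real obstacle is bookkeeping: one must keep straight the asymmetry between the exponents $n-1$ and $n$ (equivalently the stray factor $(1+t)$ present in the excedance case but not the derangement case), the substitution $z\mapsto z/(1+t)$ inside both $q$-exponentials, and the fact that the constant terms $\Gamma_0(y,q)=\widetilde{\Gamma}_0(y,q)=1$ take the place of the ``$1+$'' on the left of~\eqref{fixversion}. The final algebraic simplification—clearing the denominator and cancelling the $t^2E_1$ terms—is routine once the setup is correct.
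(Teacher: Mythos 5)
Your proposal is correct and follows essentially the same route as the paper: the paper likewise rewrites Theorems~\ref{main:result} and~\ref{main:result2} as $A_n(t,1,q)=(1+t)^{n-1}\Gamma_n(y,q)$ and $A_n(t,0,q)=(1+t)^{n}\widetilde{\Gamma}_n(y,q)$ and substitutes into~\eqref{fixversion}. You merely carry out explicitly the substitution $z\mapsto z/(1+t)$ and the final algebraic simplification that the paper leaves implicit, and your computation (including the cancellation yielding $(E_1-tE_t)/(E_t-tE_1)$) checks out.
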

\begin{proof} 
 Clearly  Theorems~\ref{main:result} and \ref{main:result2} are   equivalent to, for $n\geq1$,
\begin{align}\label{rela:gam}
A_n(t,1,q)
&=(1+t)^{n-1}\Gamma_n(y,q),\\
  A_n(t,0,q)&=(1+t)^{n}\tilde{\Gamma}_n(y,q),
  \label{rela:gamtilde}
\end{align}
where $y=t/(1+t)^2$. Substituting~\eqref{rela:gam} and \eqref{rela:gamtilde}  into~\eqref{fixversion} 
yields \eqref{gf1} and \eqref{gf2}.
\end{proof}

Recall  the well-known combinatorial interpretation of the $q$-binomial coefficients
~\cite[Prop.~1.3.17]{st0}
\begin{equation}\label{eq:qmul}
{n\brack  k}_{q}=\sum_{({\mathcal A}, {\mathcal B})}q^{\inv({\mathcal A}, {\mathcal B})},
\end{equation}
where the sum is over all ordered set partitions $({\mathcal A}, {\mathcal B})$ of $[n]$ such that $|{\mathcal A}|=k$ and 
$$
\inv({\mathcal A}, {\mathcal B}):=\{(i,j)\in {\mathcal A}\times {\mathcal B}: i>j\}.
$$

\begin{proposition} \label{rec:gamma}
The polynomials $\Gamma_n(y,q)$ and 
$\widetilde{\Gamma}_n(y,q)$ satisfy the following recurrences:
\begin{align}\label{eq:gamm}
\Gamma_{n+1}(y,q)&=\Gamma_{n}(y,q)+y\sum_{i=1}^{n-1}q^i{n\brack i}_q\Gamma_i(y,q)\Gamma_{n-i}(y,q);\\
\widetilde{\Gamma}_{n+1}(y,q)&=\Gamma_{n}(y,q)+y\sum_{i=2}^{n-1}q^i{n\brack i}_q\widetilde{\Gamma}_i(y,q)\Gamma_{n-i}(y,q);\label{eq:gamm2}
\end{align}
for $n\geq1$ and the initial conditions 
$\Gamma_0(t,q)=\Gamma_1(t,q)=\widetilde{\Gamma}_0(t,q)=1$ and 
$\widetilde{\Gamma}_1(t,q)=0$. 
\end{proposition}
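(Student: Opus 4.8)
The plan is to prove both recurrences combinatorially, directly from the interpretations
$$
\Gamma_n(y,q)=\sum_{\sigma\in \D_n}y^{\des(\sigma)}q^{\inv(\sigma)}
\quad\text{and}\quad
\widetilde{\Gamma}_n(y,q)=\sum_{\sigma\in \widetilde{\D}_n}y^{\des(\sigma)+1}q^{\inv(\sigma)}
$$
given in~\eqref{int:gamma}, by decomposing a permutation in $\D_{n+1}$ (resp.\ $\widetilde{\D}_{n+1}$) according to the position and surrounding pattern of its largest letter $n+1$. The guiding idea is that $n+1$ can never be a double descent, so in any permutation of $\D_{n+1}$ it is either a peak, or it sits in the increasing run at the very start of the word. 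First I would handle the case where $n+1$ is at position $1$: then $\sigma_1=n+1$ is a descent top with an ascent nowhere to its left, which (since double descents are forbidden) forces $\sigma$ to begin $n+1$ followed by the rest, and I will check this contributes exactly the $\Gamma_n(y,q)$ term, with the letter $n+1$ placed at the front creating no new inversions relative to a $\D_n$-permutation on the remaining letters and leaving $\des$ unchanged.

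For the main sum, I would treat the case where $n+1$ is an interior peak. Writing $\sigma=\sigma'\,(n+1)\,\sigma''$, the letter $n+1$ is a descent (it exceeds whatever follows) and is preceded by an ascent, so it contributes one descent, accounting for the factor $y$. The key structural observation is that the forbidden-double-descent condition decouples $\sigma'$ and $\sigma''$: both inherit the no-double-descent property, $\sigma'$ becomes (after standardization of its letter set) an element of some $\D_i$ and $\sigma''$ an element of $\D_{n-i}$, where $i$ is the length of the prefix. The index $i$ ranges over $1\le i\le n-1$ because $\sigma'$ must be nonempty (else $n+1$ is at the front) and $\sigma''$ must be nonempty (else $n+1$ is at the end, but then it would need an ascent to its right from $+\infty$, which is fine — I will need to verify carefully whether $i=n$ is excluded, and I expect the peak condition $\sigma_{n}<\sigma_{n+1}=n+1>\sigma_{n+2}=+\infty$ fails, so indeed $\sigma''\ne\emptyset$). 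The descents split additively as $\des(\sigma)=\des(\sigma')+\des(\sigma'')+1$, giving the product $\Gamma_i\Gamma_{n-i}$ together with the extra $y$.

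The inversion bookkeeping is where the $q$-weights enter, and this is the step I expect to be the main obstacle. I would track three sources of inversions: those internal to $\sigma'$, those internal to $\sigma''$, and those created between the blocks and by $n+1$. Since $n+1$ is larger than everything, it forms an inversion with every one of the $n-i$ letters of $\sigma''$, contributing $q^{n-i}$; but the recurrence shows a factor $q^i$, so the accounting must instead be organized around which letters form the prefix $\sigma'$. The clean way is to choose the set $\mathcal A$ of values appearing in $\sigma'$ (of size $i$) among $[n]$: the inversions between $\sigma'$ and $(n+1)\sigma''$ that arise purely from the relative order of the chosen value-set, summed over all choices, produce exactly the Gaussian binomial ${n\brack i}_q$ via the interpretation~\eqref{eq:qmul}, while standardizing each block to $[i]$ and $[n-i]$ preserves the internal inversion counts and yields $\Gamma_i(y,q)$ and $\Gamma_{n-i}(y,q)$. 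The residual factor $q^i$ should come from the inversions that the letter $n+1$ (placed after $\sigma'$ and before $\sigma''$) forms with the $i$ letters to its left, namely all of $\sigma'$; I will verify this sign/direction convention matches $\inv$ as defined in the paper. For the tilde recurrence~\eqref{eq:gamm2} the same decomposition applies to $\widetilde{\D}_{n+1}$, the extra constraint $\sigma_n<\sigma_{n+1}$ meaning the suffix block is unconstrained while the \emph{prefix} must itself lie in $\widetilde{\D}_i$ (hence $\widetilde{\Gamma}_i$ and the sum starting at $i=2$, since $\widetilde{\D}_1=\emptyset$), with the degenerate prefix-at-front case now contributing the lone $\Gamma_n(y,q)$ term; I would mirror the argument, checking that the final ascent condition lands in the correct block. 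The delicate point throughout is confirming that the standardization maps preserve both $\des$ and the internal $\inv$ exactly, and that the cross-block inversions factor cleanly into the Gaussian binomial times the explicit power of $q$.
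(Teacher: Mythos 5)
Your overall strategy coincides with the paper's: both decompose a permutation in $\D_{n+1}$ by the position of the largest letter $n+1$, split off a left and a right block at that peak, and use \eqref{eq:qmul} to convert the cross-block inversions into the Gaussian binomial. However, three of the concrete claims you commit to are wrong and would sink the execution. First, the degenerate case is at the wrong end: with the convention $\sigma_0=+\infty$, a permutation beginning with $n+1$ has $n+1$ as a double descent (and prepending $n+1$ would in any case create $n$ new inversions and a new descent, not zero), so no element of $\D_{n+1}$ starts with $n+1$. The lone $\Gamma_n(y,q)$ term comes from $n+1$ in the \emph{last} position, where it is a double ascent, contributes no inversion and no descent, and can simply be deleted. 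Second, your proposed source of the factor $q^i$ is incorrect: the letter $n+1$ forms inversions only with the letters to its \emph{right} (it precedes and exceeds them), never with those to its left, so a peak at position $j$ contributes $q^{\,n+1-j}$. The stated form then follows by letting $i$ be the length of the \emph{suffix} block, or equivalently by the symmetry $i\leftrightarrow n-i$ of $q^{n-i}{n\brack i}_q\Gamma_i(y,q)\Gamma_{n-i}(y,q)$; this is exactly what the paper does, deriving $\Gamma^{(j)}_{n+1}(y,q)=yq^{n+1-j}{n\brack j-1}_q\Gamma_{j-1}(y,q)\Gamma_{n+1-j}(y,q)$ and reindexing.

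Third, in the tilde case the condition $\sigma_n<\sigma_{n+1}$ constrains the last two letters, hence the \emph{suffix} block, not the prefix: the decomposition yields $\Gamma_{j-1}$ on the left and $\widetilde{\Gamma}_{n+1-j}$ on the right, and the suffix must have length at least $2$ (a length-one suffix would force $\sigma_n=n+1>\sigma_{n+1}$), which is where the bound $i\ge 2$ comes from. Moreover, carrying out the degenerate case correctly ($n+1$ last, the remaining word an arbitrary element of $\D_n$, the weight being $y^{\des(\sigma)+1}q^{\inv(\sigma)}$) produces $y\,\Gamma_n(y,q)$ rather than $\Gamma_n(y,q)$; comparing with $\widetilde{\Gamma}_2(y,q)=y\ne 1=\Gamma_1(y,q)$ shows that the first term of \eqref{eq:gamm2} as printed is missing a factor of $y$ — a slip in the statement that a correct version of your argument would expose, and which the paper does not catch since it proves only \eqref{eq:gamm} in detail and leaves \eqref{eq:gamm2} to the reader.
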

\begin{proof}
For $2\leq j\leq n$ let $\D^{(j)}_n:=\{\sigma\in \D_n: \sigma_j=n\}$
and define
$$
\Gamma^{(j)}_n(y,q):=\sum_{\sigma\in \D^{(j)}_n}y^{\des(\sigma)}q^{\inv(\sigma)}.
$$ 
Clearly, for $n\geq1$
\begin{equation}\label{sumB}
\Gamma_{n+1}(y,q)=\sum_{j=2}^{n+1}\Gamma^{(j)}_{n+1}(y,q)=\Gamma_n(y,q)+\sum_{j=2}^n\Gamma^{(j)}_{n}(y,q).
\end{equation}
For any finite ordered  set $X$ let ${X \choose m}$ denote the set of the $m$-element subsets of 
$X$ and $\D_X$ the set of permutations of $X$ without double descents.
Also let $\T(n,j)$ be the set of all triples $(S,\sigma_L,\sigma_R)$ such that $S\in{[n] \choose j}$ and $\sigma_L\in\D_S$, $\sigma_R\in \D_{[n]\setminus S}$.   
For $2\leq j\leq n-1$ and $\sigma\in \D^{(j)}_n$,  define the mapping $\sigma\mapsto (S,\sigma_L,\sigma_R)$ by
\begin{itemize}
\item $S=\{\sigma_i : 1\leq i\leq j-1\}$;
\item $\sigma_L=\sigma_1\sigma_2\cdots\sigma_{j-1}$ and $\sigma_R=\sigma_{j+1}\sigma_{j+2}\cdots\sigma_n$.
\end{itemize}
It is not hard to see that this mapping is a bijection between $\D_n^{(j)}$ and $\T(n-1,j-1)$ and satisfies 
$$
\des(\sigma)=\des(\sigma_L)+\des(\sigma_R)+1 
$$
and
$$
\inv(\sigma)=\inv(\sigma_L)+\inv(\sigma_R)+\inv(S,[n-1]\setminus S)+n-j.
$$
Thus, for $2\leq j\leq n$ we have
\begin{align}\label{resB}
\Gamma^{(j)}_{n+1}(y,q)&=\sum_{\sigma\in \D^{(j)}_{n+1}}y^{\des(\sigma)}q^{\inv(\sigma)}\nonumber\\
&=yq^{n+1-j}\sum_{(S,\sigma_L,\sigma_R)\in\T(n,j-1)}q^{\inv(S,[n]\setminus S)}q^{\inv(\sigma_L)}y^{\des(\sigma_L)}q^{\inv(\sigma_R)}y^{\des(\sigma_R)}\nonumber\\
&=yq^{n+1-j}\sum_{S\in{[n]\choose j-1}}q^{\inv(S,[n]\setminus S)}\sum_{\sigma_L\in \D_S}q^{\inv(\sigma_L)}y^{\des(\sigma_L)}\sum_{\sigma_R\in \D_{[n]\setminus S}}q^{\inv(\sigma_R)}y^{\des(\sigma_R)}\nonumber\\
&=yq^{n+1-j}{n\brack j-1}_q\Gamma_{j-1}(y,q)\Gamma_{n+1-j}(y,q),
\end{align}
where we apply \eqref{eq:qmul} to the last equality. Substituting~\eqref{resB} into~\eqref{sumB} we obtain~\eqref{eq:gamm}.  Similarly we can prove \eqref{eq:gamm2}, the details are left to the interested reader.
\end{proof}
\begin{remark} We can also prove Proposition~\ref{rec:gamma} from
the following known recurrence of $A_n(t,r,q)$ ($n\geq1$), which was proved in \cite{lin} by applying the $q$-differential operator to both sides of~\eqref{fixversion}, 
\begin{equation}\label{recurrence2}
A_{n+1}(t,r,q)=rA_n(t,r,q)+t\sum_{j=0}^{n-1}{n\brack j}_qq^jA_j(t,r,q)A_{n-j}(t,1,q)
\end{equation}
with  initial condition $A_0(t,r,q)=1,A_1(t,r,q)=r$. Substituting \eqref{rela:gam} and 
\eqref{rela:gamtilde} into \eqref{recurrence2} we also obtain 
the above recurrences for $\gamma$-coefficients.
\end{remark}
 %%%%%%%%%%%%%%%
\section{Concluding remarks}\label{sec:conclusion}
Using the combinatorics developed in Section~3,
we can also give a combinatorial proof of Theorem~\ref{theoremSZ}, of which a similar proof for the special $\beta=1$ case was given in \cite{as,sun}.

\begin{proof}[Proof of Theorem~\ref{theoremSZ}]
Define the statistic $``\lyc"$ by $\lyc(\sigma):=\cyc(\Phi(\sigma))$ for $\sigma\in\S_n$. Since the function $\varphi''_x:\S_n\rightarrow\S_n$ preserves the rix-factorization type of permutations for any $x\in[n]$, the statistic $``\lyc"$ is  invariant under the restricted MFS-action. Thus by Lemma~\ref{exp:rix}, we have 
$$
\sum_{\sigma\in\R_n}\beta^{\lyc(\sigma)}t^{\des(\sigma)}=\sum_{j=1}^{\lfloor n/2\rfloor}\biggl(\:\sum_{\sigma\in \R^0_{n,j}}\beta^{\lyc(\sigma)}\biggr)t^j(1+t)^{n-2j}.
$$
By Proposition~\ref{Phi},  applying  the bijection $\Phi:\S_n\rightarrow\S_n$ to both sides of the above expansion  we get \eqref{cycle}.
\end{proof}

We can  further extend \eqref{cycle} and \eqref{der2} to 
permutations with  a given  number of fixed points. 
For $0\leq j\leq n$ define  the set
$$\S_{n,j}:=\{\sigma\in\S_n: \fix(\sigma)=j\}.$$
Then,  it is easy to see that 
\begin{align}
\sum_{\sigma\in\S_{n,j}}\beta^{\cyc(\sigma)}t^{\exc(\sigma)}={n\choose j}\beta^j\sum_{\sigma\in\S_{n-j,0}}\beta^{\cyc(\sigma)}t^{\exc(\sigma)}.
\end{align}
Also, it is known \cite[Eq.(4.3)]{sw} and  easy to 
 deduce from \eqref{fixversion} that 
\begin{align}\label{fix-maj}
\sum_{\sigma\in\S_{n,j}}t^{\exc(\sigma)}q^{\maj(\sigma)-\exc(\sigma)}=
{n\brack j}_qA_{n-j}(t,0,q).
\end{align}

We derive from Theorem~\ref{theoremSZ} and 
Theorem~\ref{main:result2} the following result.
\begin{proposition}
For $1\leq j\leq n$ we have 
\begin{equation}\label{cycle-bis}
\sum_{\sigma\in\S_{n,j}}\beta^{\cyc(\sigma)}t^{\exc(\sigma)}=\sum_{k=1}^{\lfloor n/2\rfloor}{n\choose j}\biggl(\:\sum_{\sigma\in \DE_{n-j,k}}\beta^{\cyc(\sigma)+j}\biggr)\,t^k(1+t)^{n-j-2k},
\end{equation}
and 
\begin{equation}\label{exp:fixed}
\sum_{\sigma\in\S_{n,j}}t^{\exc(\sigma)}q^{\maj(\sigma)-\exc(\sigma)}=\sum_{k=1}^{\lfloor (n-j)/2\rfloor}{n\brack j}_q\
%widetilde{\gamma}_{n-k,j}(q)
\biggl(\:\sum_{\sigma\in \widetilde{\D}_{n-j,k}}q^{\inv(\sigma)}\biggr)
\,t^k(1+t)^{n-j-2k}.
\end{equation}
\end{proposition}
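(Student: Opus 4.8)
The plan is to obtain both identities as formal consequences of the two main structural theorems, fed into the elementary fixed-point reductions already recorded just above the statement. No new combinatorics is needed: all the content sits in Theorems~\ref{theoremSZ} and~\ref{main:result2}, together with the standard observation that deleting the $j$ fixed points of a permutation in $\S_{n,j}$ leaves a derangement of the remaining $n-j$ letters.

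For \eqref{cycle-bis}, I would start from the decomposition identity
$$\sum_{\sigma\in\S_{n,j}}\beta^{\cyc(\sigma)}t^{\exc(\sigma)}={n\choose j}\beta^j\sum_{\sigma\in\S_{n-j,0}}\beta^{\cyc(\sigma)}t^{\exc(\sigma)},$$
which reflects that choosing the $j$ fixed points (each a $1$-cycle, hence a factor $\beta$, and never an excedance) and placing a fixed-point-free permutation on the remaining $n-j$ letters is a bijection preserving $\exc$ and shifting $\cyc$ by $j$. Since $\S_{n-j,0}$ is the derangement set of $[n-j]$, applying Theorem~\ref{theoremSZ} with $n$ replaced by $n-j$ rewrites the right-hand sum as a $\gamma$-expansion over the sets $\DE_{n-j,k}$. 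Absorbing the prefactor $\beta^j$ into the cycle exponent converts $\beta^{\cyc(\sigma)}$ into $\beta^{\cyc(\sigma)+j}$, yielding exactly the summand in \eqref{cycle-bis}.

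For \eqref{exp:fixed}, the reduction \eqref{fix-maj} has already isolated the derangement factor,
$$\sum_{\sigma\in\S_{n,j}}t^{\exc(\sigma)}q^{\maj(\sigma)-\exc(\sigma)}={n\brack j}_qA_{n-j}(t,0,q),$$
so it suffices to substitute Theorem~\ref{main:result2} (again with $n$ replaced by $n-j$) for $A_{n-j}(t,0,q)$ and then carry the $q$-binomial coefficient ${n\brack j}_q$ inside the $k$-sum. This reproduces \eqref{exp:fixed} verbatim, with the range $1\le k\le\lfloor(n-j)/2\rfloor$ already matching.

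The only point requiring a word of care is the summation range in \eqref{cycle-bis}: the derangement version of Theorem~\ref{theoremSZ} naturally produces a sum over $1\le k\le\lfloor(n-j)/2\rfloor$, whereas the statement runs to $\lfloor n/2\rfloor$. These agree because $\DE_{n-j,k}$ is empty once $k>\lfloor(n-j)/2\rfloor$, a derangement of $[n-j]$ without double excedances having at most $\lfloor(n-j)/2\rfloor$ excedances, so enlarging the range contributes only zero terms. Hence there is no genuine obstacle, and the proposition follows by direct substitution into the two quoted decompositions.
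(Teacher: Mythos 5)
Your proposal is correct and follows exactly the route the paper intends: the paper states the two reduction identities (the $\binom{n}{j}\beta^j$ decomposition and \eqref{fix-maj}) immediately before the proposition and simply says the result is derived from Theorems~\ref{theoremSZ} and~\ref{main:result2}, which is precisely your substitution argument. Your remark that the upper summation limit $\lfloor n/2\rfloor$ in \eqref{cycle-bis} is harmless because $\DE_{n-j,k}$ is empty for $k>\lfloor(n-j)/2\rfloor$ is a useful clarification the paper leaves implicit.
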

Shareshian and Wachs~\cite[Remark~5.5]{sw} also proved 
the $\gamma$-positivity of the left-hand side of  \eqref{exp:fixed}
and  further generalized 
\eqref{gam:der}   to 
\begin{align}\label{SW3}
\sum_{\sigma\in \S_{n,0}}t^{\exc(\sigma)} p^{\des(\sigma)}q^{\maj(\sigma)-\exc(\sigma)}= \sum_{k=0}^{{\lfloor (n-1)/2\rfloor}} \widetilde{\gamma}_{n,k}(p,q) t^k(1+t)^{n-1-2k},
\end{align}
where $\widetilde{\gamma}_{n,k}(p,q)$ are polynomials in $\N[p,q]$. Obviously we have 
  $\widetilde{\gamma}_{n,k}(1,q)=\widetilde{\gamma}_{n,k}(q)$. 
  It would be interesting  to find a combinatorial interpretation 
  for the polynomials $\widetilde{\gamma}_{n,k}(p,q)$ in the light of
  \eqref{der2}.

\subsection*{Acknowledgement} 
The first author was supported by the China Scholarship Council  
to study abroad  from 2010 to 2014 and would like to thank 
Institut Camille Jordan and 
Universit\'e  Claude Bernard Lyon 1, where most of this work was done,  for hosting him.

\end{document}